\definecolor{com}{rgb}{0,0.3,0.75} %colour for changes in this version
\definecolor{blau}{rgb}{0,0,0.75} %colour for in-document links
\newtheorem{theorem}{Theorem}
\newtheorem{lemma}{Lemma}
\newtheorem{prop}{Proposition}
\newtheorem{coroll}{Corollary}
\theoremstyle{definition}
\newtheorem{example}{Example}
\newcommand{\JAP}{\emph{Journal of Applied Probability}}
\newcommand{\PTRF}{\emph{Probability Theory and Related Fields}}
\newcommand{\mom}{\text{model}\,\ensuremath{\mathcal{M}}}
\newcommand{\mor}{\text{model}\,\ensuremath{\mathcal{R}}}
\newcommand{\momp}{\text{model}\,\ensuremath{\mathcal{M}}\,}
\newcommand{\morp}{\text{model}\,\ensuremath{\mathcal{R}}\,}
\newcommand{\fallfak}[2]{\ensuremath{#1^{\underline{#2}}}}
\newcommand{\N}{\ensuremath{\mathbb{N}}}
\newcommand{\Gro}{\ensuremath{\mathcal{O}}}
\def\P{{\mathbb {P}}}
\def\E{{\mathbb {E}}}
\def\V{{\mathbb {V}}}
\newcommand\OL{\Gro_{{\mathcal{L}}_1}}
\newcommand{\OO}{\ensuremath{\mathcal{O}}}
\newcommand{\as}{\ensuremath{\xrightarrow{(a.s.)}}}
\newcommand{\calW}{\ensuremath{\mathcal{W}}}
\newcommand\backward{\nabla}
\newcommand\field{\mathbb{F}}
\newcommand\indicator{\mathbb{I}}
\newcommand\given{\, \vert \, }
\newcommand\normal{\mathcal{N}}
\newcommand\Bigiven{\, \Big\vert \, }
\newcommand\inprob{\buildrel {\rm P} \over  \longrightarrow}
\newcommand\almostsure{\buildrel {\rm a.s} \over  \longrightarrow}
\newcommand\inL{\buildrel {{\rm L}_1} \over  \longrightarrow}
\newcommand\convD{{\buildrel  {\rm D} \over  \longrightarrow}}
\newcommand\matA{{\bf A}}
\newcommand\matM{{\bf M}}
\newcommand\Polya{P\' olya}
\newcommand{\tableT}{\rule{0pt}{2.5ex}}
\newcommand{\tableB}{\rule[-1.2ex]{0pt}{0pt}}
\author[M.~Kuba]{Markus Kuba}
\address{Markus Kuba\\
Institute of Applied Mathematics and Natural Sciences\\
University of Applied Sciences - Technikum Wien\\
H\"ochst\"adtplatz 5, 1200 Wien} 
\email{kuba@technikum-wien.at}
\author[H.~M.~Mahmoud]{Hosam M.~Mahmoud}
\address{Hosam M.~Mahmoud\\
Department of Statistics\\
The George Washington University, Washington, D.C. 20052, U.S.A.}
\email{hosam@gwu.edu}
\title[Affine urn models]{Two-colour balanced affine urn models with multiple drawings~I: central limit theorems}
\keywords{Urn model, random structure, martingale, central limit theorem}%
\subjclass[2000]{60C05, 60F05, 60G42}        %combinatorial probability
\begin{document}
\begin{abstract}
This is a research endeavor in two parts. We study a class of 
balanced urn schemes on balls of two colours (say white and black).
At each drawing, a sample of size $m\ge 1$ is drawn from the urn, and ball addition rules are applied. 
We consider these multiple drawings under sampling with or without replacement.
We further classify ball addition matrices according to the structure of the expected value 
into affine and nonaffine classes. We give a necessary and sufficient condition for a scheme to be in the affine subclass. 
For the affine subclass, we get explicit results for the expected value and second moment
of the number of white balls after $n$ steps and an asymptotic expansion of the variance. Moreover, we uncover a martingale structure, amenable to a central limit theorem formulation.
This unifies several earlier works focused on special cases of urn models with multiple drawings~\cite{ChenKu2013+,ChenWei,KuMaPan2013+,Mah2012,Moler,TsukijiMahmoud2001}. 
The class is parametrized by $\Lambda$, specified by the ratio of the two eigenvalues of a ``reduced'' ball replacement matrix and the sample size.
We categorize the class into small-index urns ($\Lambda <\frac 1 2$), critical-index urns ($\Lambda = \frac 1 2$), and
large-index urns ($\Lambda > \frac 1 2$), and triangular urns.
In the present paper (Part I), 
we obtain central limit theorems for small- and critical-index urns and prove almost-sure convergence for triangular and large-index urns. 
In a companion paper (Part II), we discuss the moment structure of large-index urns and triangular urns.
\end{abstract}

\date{\today}
\maketitle
\section{Introduction}
Urn schemes are simple, useful and versatile mathematical tools for modeling many
evolutionary processes in diverse applications such as algorithmics,  
genetics, epidimiology, physics, engineering, economics, networks (social and other types), and many more. Modeling via urns is centuries old, but perhaps the earliest  contributions in the flavor commonly called P\' olya urns (the subject of the present paper) are~\cite{Eggenberger1923,Ehrenfest}. In the first of these two classics, urns were intended to model the diffusion of gases. In the second, urns were meant
to model contagion. Many \Polya\ urn models useful for numerous applications were added later on.
In fact, they are too many (literally hundreds) to be listed individually. The sources~\cite{JohnsonKotz1977,Kotz1997} are
classic surveys listing many of these applications; 
see also~\cite{Mah2008}, where two chapters are devoted to applications in algorithmics and biosciences. 

While the term ``\Polya\ urn'' refers to a vast variety of schemes,
there is a common thread among most of them. 
Urns of the classic flavor on two colours (say white and black) evolve
in the following way. At the beginning, time zero,
the urn contains a certain number of  white and black balls. 
Thereafter, evolution of the urn occurs in discrete time steps. 
At every step, 
a ball is chosen at random from the urn. The colour of the ball is
inspected, then the ball is reinserted in the urn. According
to the colour of the sampled ball, other balls are added/removed following certain 
rules---if we have chosen a white ball, we put in
the urn $a$ white balls and $b$ black balls, but if we have chosen
a black ball, we put in the urn $c$ white balls and $d$ black
balls. The values $a, b, c, d \in \mathbb{Z}$ are fixed. 
The urn model is specified by the $2\times 2$ ball
replacement matrix $\matM = 
\begin{pmatrix} a & b \\ 
                         c &d
 \end{pmatrix}$. One is usually interested in the number 
of white balls $W_n$ after~$n$ draws, and the number of black balls~$B_n$ after~$n$ draws.
\subsection{\Polya\ urn models with multiple drawings} 
In the classic version of \Polya\ urns, one ball is sampled at each unit of (discrete) time.
The present work is devoted to the study of a generalization of the 
\Polya\ urn model, 
where \emph{multiple} balls are drawn at each discrete time step, 
their colours are inspected, then the sample is reinserted in 
the urn. Additions and deletions take place according to the drawn sample (multiset).
Such urn models recently received attention in the literature, see 
for example~\cite{ChenKu2013+,ChenWei,JohnsonKotzMahmoud2004,KuMaPan2013+,Mah2012,Moler,Renlund,TsukijiMahmoud2001}. 
The addition/removal of balls depends on the combinations of colours of the drawn balls. 
We use the notation  $\{W^kB^{m-k}\}$
to refer to a sample of size $m$ containing $k$ white balls and $m-k$ black balls.
Specifically, we draw $m\ge 1$ balls and add/remove white and black balls 
according to the multiset $\{W^kB^{m-k}\}$ of observed colours: If we draw $k$ white and $m-k$ black balls, we add $a_{m-k}$ white and $b_{m-k}$ black balls, $0\le k\le m$.
The ball replacement matrix of this urn model with multiple drawings is a rectangular $(m+1)\times 2$ matrix:
\begin{equation}
\label{MuliDrawsLinMatrix}
    \matM =
    \begin{pmatrix}
    a_0& b_0   \\
    a_1  & b_1\\
    \vdots&\vdots\\
    a_{m-1}   & b_{m-1}\\
    a_m&  b_m  \\
    \end{pmatrix}.
\end{equation}
We assume throughout that the urn model is \emph{balanced}, 
such that the overall number of added/ 
removed balls is a constant $\sigma$, independent of the composition of the sample: $a_k+b_k=\sigma\ge 1$, $0\le k\le m$. Moreover, we are only interested in so-called tenable urn models, where the process of drawing and replacing balls can be continued ad infinitum.
Several of the afore-mentioned works on urn models 
with multiple drawings were only concerned with a specific urn model. 
This includes an urn model related 
to logic circuits~\cite{Moler,TsukijiMahmoud2001}, the generalized \Polya-Eggenberger 
urn~\cite{ChenKu2013+,ChenWei}, and the generalized 
Friedman urn~\cite{KuMaPan2013+}. In this work, 
we unify and generalize these earlier works. 
We do so by discussing a more general model encompassing 
all the previously mentioned specific urns. 
\subsection{Plan of the paper and notation}
The main ingredient for our analysis is to specify all $(m+1)\times 2$ 
ball replacement matrices for which the conditional 
expectation of the number of white balls $W_n$ after $n$ draws has an affine structure of the form
$$\E\bigl[W_n \given \field_{n-1}\bigr]= \alpha_n W_{n-1} +\beta_n,\qquad n\ge 1,$$
for certain deterministic sequences $\alpha_n,\beta_n$, where $\field_n$ 
denotes the $\sigma$-algebra generated by the first~$n$ draws from the urn. 
So, we are considering a class of two-colour balanced tenable affine urns, grown under
sampling multisets. Beside such characterization, we also present a central limit theorem for $W_n$ for urns in this class with small 
and critical index, a parameter that will be defined in the sequel.
We shall return soon in a companion paper~\cite{KuMaPan201314} to deriving more families of limit laws concerning urns in the class completing the analysis of limit laws. In particular, we discuss urn models with a large index and triangular urns, using the so-called method of moments. 

We denote by 
$\fallfak{x}{k}$ the $k$th falling factorial, $x(x-1)\dots (x-k+1)$, $k\ge 0$, with $\fallfak{x}0=1$.
We shall also use $\backward$, the backward difference operator, defined by $\backward h_n = h_n -h_{n-1}$, when acting on a function $h_n$.
\section{Preliminaries}
\label{Sec:Prelim}
\subsection{Sampling schemes}
\label{Sec:PrelimSampling}
Assume that an urn contains $w$ white and $b$ black balls. We consider two different sampling schemes for drawing the $m$ balls at each step: \mom\ and \mor. In \mom\ we draw the $m$ balls without replacement. The $m$ balls are drawn at once and their colours
are examined. 
After the sample is collected, we put the entire sample back in the urn and execute the replacement rules according to the counts of colours observed. 
The tenability assumption implies that for \mom\ the coefficients $a_k$ of the 
ball replacement matrix~\eqref{MuliDrawsLinMatrix} 
satisfy the condition 
$a_k\ge -(m-k)$,\footnote{These assumptions can be relaxed a little bit,
if the initial values $W_0$ and $B_0$ are adapted to the entries in the ball replacement matrix. E.g., for $m=1$, the urn model with ball replacement matrix
$\left(\begin{smallmatrix}-3 & 8\\
6& -4
\end{smallmatrix}\right)$ is still tenable, if $W_0$ is a multiple of $3$ and $B_0$ a multiple of $4$.} for $0\le k\le m$. 

The probability $\P(W^kB^{m-k})$ of drawing $k$ white and $m-k$ black balls is given by 
$$\P(W^kB^{m-k})= \frac1{\fallfak{(b+w)}{m}}\binom{m}{k}\fallfak{w}{k}\, \fallfak{b}{m-k}=\frac{\binom{w}k\binom{b}{m-k}}{\binom{b+w}m},\qquad 0\le k\le m.$$
Thus $X$, the number of white balls in the sample,
follows a  hypergeometric distribution,
with parameters $w+b, w$, and $m$, that is, 
one that counts the number of white balls in a sample of size~$m$ balls 
taken out of an urn containing $w$ white and $b$ black balls (a total of
$\tau = w+b$ balls).
The expected value and second moment are given by
$$\E[X]=m\frac w \tau,\qquad \E[X^2] = \frac{w(w-1)m(m-1)}{\tau(\tau-1)}
         + \frac{w m}\tau.$$

In \mor, we draw the $m$ balls with replacement. The $m$ balls are drawn one at a time. After a ball is drawn,  
its colour is observed, and is reinserted in the urn, and thus 
it might reappear in the sampling of
one multiset. After $m$ balls are collected in this way (and they are all back in the urn),
we execute the replacement rules according to the counts of colours observed.
By the tenability assumption $a_k\ge -1$, for $0\le k\le m-1$ and $a_m\ge 0$, for \mor.

The probability $\P(W^kB^{m-k})$ of drawing $k$ 
white and $m-k$ black balls is given by 
$$\P(W^kB^{m-k}) = \frac1{(b+w)^m}\binom{m}{k}w^k \, b^{m-k},\qquad 0\le k\le m.$$
In other words, under \morp, the number of white balls in the multiset 
of size $m$ follows a binomial distribution with parameters $m$, and ${w}/{\tau}$, one that counts the number of successes in~$m$ 
independent identically distributed experiments, with  $w/\tau$ probability 
of success per experiment.
Let $Y$ denote such a binomially distributed 
random variable. 
The expected value and second moment are given by
$$\E[Y]=m\frac w \tau,\qquad \E[Y^2]= m\frac w \tau\Big(1-\frac w \tau\Big) + m^2\frac{w^2}{\tau^2}.$$
\subsection{Stochastic recurrence}
We start with $W_0$ white and $B_0$ black balls, $W_0,B_0\in\N_0$ assuming that $W_0+B_0\ge m$, 
to enable at least the first draw. Thereafter, tenability guarantees the perpetuation of drawing.
We are interested in the distribution of the numbers $W_n$ and $B_n$ of white and black balls after $n$ draws, respectively. 
We denote by
%removed equation number
$$T_n=W_n+B_n,\qquad n\ge 0,$$
the total number of balls contained in the urn after $n$ draws. As 
we are considering a class of balanced urns, the total number of 
balls $T_n$ after $n$ draws is a deterministically linear:
%removed equation number
$$T_n= \sigma n + T_0, \qquad n\ge 0.$$
We restrict ourselves to the case where the total number of balls 
increases after each draw, in other words we consider $\sigma \ge 1$.

In what follows, 
we use the notation 
$\mathbb{I}_{n}(W^kB^{m-k})$ to stand for the indicator
of the event that the multiset $\{W^kB^{m-k}\}$ is drawn in the
$n$th sampling.
Conditioning on the composition of the urn after $n-1$ draws, we obtain a stochastic recurrence for $W_n$.
The number of white balls after~$n$ draws is the number of white balls after $n-1$ draws, plus the contribution of white balls
after the $n$th sample is obtained:
\begin{equation}
\label{MuliDrawsLinDistEqn1}
W_{n} =  W_{n-1}  + \sum_{k=0}^{m} a_{m-k} \, \mathbb{I}_{n}(W^kB^{m-k}),\qquad n\ge 1.
\end{equation}
Let $\field_{n-1}$ denote the $\sigma$-field generated by the first $n-1$ draws. 
For $0\le k\le m$ the indicator variables $\mathbb{I}_{n}(W^kB^{m-k})$ satisfy 
\begin{equation}
\label{MuliDrawsLinDistEqn2}
\P\bigl(\mathbb{I}_{n}(W^kB^{m-k})=1\given \field_{n-1}\bigr)=\frac{\binom{W_{n-1}}{k}\binom{B_{n-1}}{m-k} }{\binom{T_{n-1}}{m}}=\frac{\binom{W_{n-1}}{k}\binom{T_{n-1}-W_{n-1}}{m-k} }{\binom{T_{n-1}}{m}}
\end{equation}
for \mom, and 
\begin{equation}
\label{MuliDrawsLinDistEqn3}
\P\bigl(\mathbb{I}_{n}(W^kB^{m-k})=1\given \field_{n-1}\bigr)=\binom{m}{k}\frac{W_{n-1}^k B_{n-1}^{m-k} }{T_{n-1}^m}=\binom{m}{k}\frac{W_{n-1}^k(T_{n-1}-W_{n-1})^{m-k} }{T_{n-1}^{m}}
\end{equation}
for \mor. We obtain for $W_n^{s}$, $s\ge 1$, a stochastic recurrence by taking the $s$th power 
of~\eqref{MuliDrawsLinDistEqn1}, and using the fact that the indicator variables are mutually exclusive:
\begin{equation}
\label{MuliDrawsLinDistEqn4}
W_{n}^s =  \sum_{\ell=0}^{s}\binom{s}\ell W_{n-1}^{s-\ell}\sum_{k=0}^{m} a_{m-k}^{\ell} \, \mathbb{I}_{n}(W^kB^{m-k})
,\qquad n\ge 1.
\end{equation}
\section{Affine expectation}
We classify ball replacement matrices according to the structure of the conditional expected value. 
Our motivation is that all previously treated specific urn models 
with multiple drawings~\cite{ChenKu2013+,ChenWei,KuMaPan2013+,Mah2012,Moler,TsukijiMahmoud2001}
had one feature in common, namely a simple recurrence 
relation for the conditional expectation of an affine form
$$\E\bigl[W_n \given \field_{n-1}\bigr]= \alpha_n W_{n-1} +\beta_n,\qquad n\ge 1,$$
where $\alpha_n$ and $\beta_n$ are certain deterministic sequences.  
It is desired to unify all the earlier special cases into a single simple model, and find a more general theory to work as an umbrella for these special cases and other special cases that may be equally important in application. 
In~\cite{Mah2012}, a characterization of all ball replacement matrices 
giving rise to an affine linear conditional expected value was given for the case of drawing $m=2$ balls, 
under sampling without replacement. 
We extend this analysis in the next subsection to arbitrary $m\ge 1$,
for both sampling models and characterize all ball replacement matrices
leading to such a simple relation. (Note that our results stay valid for $m=1$; here our model reduces to ordinary balanced urn models.)
Subsequently, this allows us to obtain closed formul\ae{}\, for the expected value and second moment, and to uncover an associated martingale structure. Later on, this is exploited to obtain limit theorems.
\subsection{A necessary and sufficient condition for average affinity}
We obtain, for $0\le k\le m$, a necessary and sufficient condition on the numbers $a_k,b_k$
for the conditional expectation to take an affine form, reducing the number of significant 
parameters to three: $a_{m-1}$, $a_m$ and the balance~$\sigma$.
\begin{prop}
\label{MuliDrawsLinPropLinear}
Suppose we are given the numbers $a_{m-1}$ and $a_m$, and the balance factor
$\sigma=a_k+b_k\ge 0$. For both sampling schemes, the random variable $W_n$ satisfies a linear affine relation of the form 
$$\E\bigl[W_n \given \field_{n-1}\bigr] = \alpha_n W_{n-1} +\beta_n,\qquad n\ge 1,$$
if and only if, for $0\le k\le m$, 
the numbers $a_k$ satisfy the condition
$$a_k=(m-k)a_{m-1}-(m-k-1)a_m.$$
Equivalently, the coefficients $a_k$ themselves satisfy an affinity condition: 
$$a_k= a_0 + h k,$$ with $h$ (and $h=\frac{a_m-a_0}{m}$) an integer guaranteeing tenability.
The sequences $\alpha_n$ and $\beta_n$ are given in terms of $a_{m-1},a_m$ and $T_n$ by
$$\alpha_n= \frac{T_{n-1}+m(a_{m-1}-a_m)}{T_{n-1}}, \qquad\text{and}\qquad \beta_n=a_m, \qquad n\ge 1.$$
\end{prop}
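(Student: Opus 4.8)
The plan is to work directly from the stochastic recurrence~\eqref{MuliDrawsLinDistEqn1}. Taking conditional expectation given $\field_{n-1}$ and writing $X$ for the number of white balls in the $n$th sample (hypergeometric under \momp, binomial under \morp), the increment becomes
$$\E\bigl[W_n\given\field_{n-1}\bigr]=W_{n-1}+\E\bigl[g(X)\given\field_{n-1}\bigr],\qquad g(k):=a_{m-k},$$
since the indicators $\mathbb{I}_n(W^kB^{m-k})$ are mutually exclusive and $\{X=k\}$ is exactly the event that $\{W^kB^{m-k}\}$ is drawn. The whole question thus reduces to determining for which functions $g$ the quantity $\E[g(X)\given\field_{n-1}]$ is an affine function of $W_{n-1}$ (for each fixed value $T_{n-1}=\tau$), uniformly for both sampling schemes. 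The only fact about both schemes that I would use here is their common first moment $\E[X\given\field_{n-1}]=mW_{n-1}/T_{n-1}$.

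First I would record that the two stated forms of the condition coincide. Expanding $(m-k)a_{m-1}-(m-k-1)a_m=(m-k)(a_{m-1}-a_m)+a_m$ and setting $h:=a_m-a_{m-1}$ gives $a_k=a_m-(m-k)h=a_0+hk$, so the closed formula is precisely the statement that $(a_k)$ is affine in $k$ (integrality of $h=(a_m-a_0)/m$ being then forced by integrality of the $a_k$), equivalently that $g(k)=a_{m-k}=a_m-hk$ is affine in $k$. For \emph{sufficiency} this makes the computation immediate: by linearity and the common mean,
$$\E\bigl[g(X)\given\field_{n-1}\bigr]=a_m-h\,\E\bigl[X\given\field_{n-1}\bigr]=a_m-\frac{hm}{T_{n-1}}\,W_{n-1},$$
whence $\E[W_n\given\field_{n-1}]=\bigl(1-hm/T_{n-1}\bigr)W_{n-1}+a_m$. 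Since $-hm=m(a_{m-1}-a_m)$, this is exactly $\alpha_nW_{n-1}+\beta_n$ with the claimed $\alpha_n$ and $\beta_n$, valid for both schemes at once.

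The substantive direction is \emph{necessity}. Here I would expand $g$ in the Newton (binomial-coefficient) basis, $g(k)=\sum_{j=0}^{m}\binom{k}{j}\,\Delta^j g(0)$, where $\Delta g(k)=g(k+1)-g(k)$, so that
$$\E\bigl[g(X)\given\field_{n-1}\bigr]=\sum_{j=0}^{m}\Delta^j g(0)\,\E\Bigl[\tbinom{X}{j}\Bigm|\field_{n-1}\Bigr].$$
The point is that the binomial factorial moments are explicit in both models: a short Vandermonde computation gives $\E[\binom{X}{j}\given\field_{n-1}]=\binom{m}{j}(W_{n-1}/T_{n-1})^{j}$ under \morp, and $\E[\binom{X}{j}\given\field_{n-1}]=\binom{m}{j}\binom{W_{n-1}}{j}\big/\binom{T_{n-1}}{j}$ under \momp. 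In each case this is a polynomial in $W_{n-1}$ of \emph{exact} degree $j$, because the prefactors ($\binom{m}{j}$, respectively $\binom{m}{j}/\binom{T_{n-1}}{j}$) are nonzero for $0\le j\le m\le T_{n-1}$; hence these $m+1$ polynomials form a basis of the polynomials of degree $\le m$ in $W_{n-1}$. Consequently $\E[g(X)\given\field_{n-1}]$ has degree $\le 1$ in $W_{n-1}$ if and only if every coefficient $\Delta^j g(0)$ with $j\ge 2$ vanishes, and the latter is exactly the assertion that $g$—equivalently $(a_k)$—is affine, closing the equivalence.

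The main obstacle I anticipate is this necessity step, and in particular handling both sampling schemes in one stroke: the argument hinges on the factorial moments of $X$ being polynomials of full degree $j$ in $W_{n-1}$, which needs the two moment identities above and the nondegeneracy of their leading coefficients (hence the tenability/size constraint $m\le T_{n-1}$). A secondary point to state carefully is the precise meaning of ``affine in $W_{n-1}$'': I would read the hypothesis as requiring the polynomial $w\mapsto\E[g(X)\given T_{n-1}=\tau,\,W_{n-1}=w]$ to have degree $\le 1$ for each admissible $\tau$, which is the formulation under which the basis argument delivers the coefficient constraints $\Delta^j g(0)=0$ for $j\ge2$.
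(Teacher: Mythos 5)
Your proof is correct, but it takes a genuinely different route from the paper's. The paper first expands the conditional expectation in the monomial basis (Lemma~\ref{MuliDrawsLinLemma1}), obtaining explicit coefficients $f_{n,i}$ for each sampling scheme, and then imposes $f_{n,i}=0$ for $2\le i\le m$: for \morp this gives an upper-triangular linear system whose claimed solution is ``checked by simple algebraic manipulations, which are omitted'', while for \momp the coefficients still involve $T_{n-1}$, so the paper must let $n$ grow to force each falling-factorial coefficient to vanish, and then verifies sufficiency separately via Vandermonde's convolution. You instead expand the increment function $g(k)=a_{m-k}$ in the Newton basis and pair it with the factorial moments $\E\bigl[\binom{X}{j}\given\field_{n-1}\bigr]$, which are polynomials of exact degree $j$ in $W_{n-1}$ under both schemes; this diagonalizes the problem, replacing the paper's triangular and overdetermined systems by the immediate conditions $\Delta^j g(0)=0$ for $j\ge 2$, handling both models in one stroke and working for each fixed $n$ separately (no asymptotics in $n$ needed). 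Your sufficiency direction is also leaner: linearity plus the common first moment $\E[X\given\field_{n-1}]=mW_{n-1}/T_{n-1}$ yields the stated $\alpha_n$ and $\beta_n$ directly for both schemes at once. The caveat you flag --- that ``affine'' must be read as a statement about the degree of the interpolating polynomial in $W_{n-1}$, which is only forced pointwise on the support of $W_{n-1}$ --- is equally implicit in the paper's passage from the affine hypothesis to $f_{n,i}=0$, so you are at the same level of rigor there.
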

For technical reasons we assume from this point on that for balanced affine urn models the factors $\alpha_n$, as stated in Proposition~\ref{MuliDrawsLinPropLinear}, satisfy $\alpha_n>0$ for $n\ge 1$.
Equivalently, we make the assumption $T_0+m(a_{m-1}-a_m)>0$. In view of tenability and the steady increase of balls ($\sigma\ge 1$) this is a natural assumption and not really a restriction.
If for a certain model $T_0+m(a_{m-1}-a_m)\le 0$, after only a few draws (say $j_0\ge 1$), we will have $T_{j_0}+m(a_{m-1}-a_m) > 0$. 
%This was one of Chen Chen's corrections. 
We then restart the urn and take $j_0$ as the new beginning of time.

%
%\begin{remark}
%\label{MuliDrawsLambdaCondition}
%By the tenability assumption on the urn, we obtain 
%for both sampling models the conditions $a_m\ge 0$, and also $b_0\ge 0$. Thus, we get for $k=0$
%the condition $$\sigma=a_0+b_0= m(a_{m-1}-a_m)+a_m+b_0\ge m(a_{m-1}-a_m),$$ or 
%$$\Lambda=\frac{m(a_{m-1}-a_m)}{\sigma}<1, \qquad\text{for\ } a_m>0, \quad\text{or \ } b_0>0.$$ 
%Equivalently, we have $\Lambda=\frac{a_{0}-a_m}{\sigma}$. Later on, we will show that the parameter $\Lambda$ governs the nature of the limiting distribution of~$W_n$.    
%Note that for $a_m=0$, we obtain $a_k=(m-k)a_{m-1}$. In this case we assume that $a_{m-1}>0$ in order to ensure that the process of drawing and replacing balls does not 
%degenerate.
%\end{remark}

An immediate consequence of the affinity condition is the appearance of a martingale, and simple 
closed formul\ae{}\, for the expected value and the variance. 
Moreover, by appropriate choices of
the parameters $a_{m-1},a_m$ and the balance factor $\sigma$, the affinity condition covers many of the previously treated specific urn models with multiple drawings.
\begin{example}
Let $a_m=a_{m-1}=c$. We obtain $a_k=c$ for $0\le k\le m$, such that the random variable $W_n$ degenerates to a deterministic value: $W_n=W_0 + nc$.
\end{example}
\begin{example}
For $m=2$, we obtain the condition $a_0-2a_1+a_2=0$; this affinity
condition is discussed in~\cite{Mah2012}, which only considers \mom. 
\end{example}
\begin{example}
For $a_m=mc$, $a_{m-1}=(m-1)c$ and $\sigma = mc$, we obtain the generalized Friedman urn model 
with $a_k=kc$, as discussed in~\cite{KuMaPan2013+} under both sampling schemes.
\end{example}
\begin{example}
For $a_m=0$, $a_{m-1}=c$ and $\sigma=mc$, we obtain the generalized \Polya\ urn model 
with $a_k=(m-k)c$, as discussed in~\cite{ChenKu2013+,ChenWei}. 
\end{example}
\begin{example}
For $a_m=1$,  $a_{m-1}=0$ and $\sigma = 1$, we obtain $a_k=-(m-k)+1$, 
an urn model for logic circuits treated in~\cite{Moler,TsukijiMahmoud2001}.
\end{example}

In order to prove Proposition~\ref{MuliDrawsLinPropLinear}, 
we first determine the general structure of the conditional expectation.
\begin{lemma}
\label{MuliDrawsLinLemma1}
For both sampling schemes, the conditional expected value of the random 
variable~$W_n$ is a polynomial of degree $m$ (the sample size)
in~$W_{n-1}$:
$$\E\bigl[W_n \given \field_{n-1}\bigr] = \sum_{i=0}^{m}f_{n,i} W^i_{n-1},\qquad n\ge 1.$$
The values $f_{n,i}$ are model dependent. For \morp, we get 
$$f_{n,i}=\delta_{i,1}+\frac{(-1)^i}{T_{n-1}^i}\sum_{k=0}^{i}a_{m-k}\binom{m}k\binom{m-k}{m-i}(-1)^k.$$
For \momp, we get
$$f_{n,i}=\delta_{i,1}+\frac{1}{\fallfak{T_{n-1}}m}\sum_{j=0}^{m}
         \fallfak{T_{n-1}}{j}[x^i]\, p_{m,j}(x),$$
where the polynomials $p_{m,j}(x)$ are, for $0\le j \le m$, given by
$$p_{m,j}(x)= \sum_{k=0}^{m-j} a_{m-k}\binom{m}k \fallfak{x}{k}\binom{m-k}j\fallfak{(-x)}{m-k-j}.$$
\end{lemma}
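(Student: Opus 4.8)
The plan is to start from the distributional recurrence \eqref{MuliDrawsLinDistEqn1}, take the conditional expectation given $\field_{n-1}$, and reduce everything to the conditional drawing probabilities \eqref{MuliDrawsLinDistEqn2} and \eqref{MuliDrawsLinDistEqn3}, which are already recorded. Since the indicators are exhaustive and mutually exclusive, linearity of conditional expectation gives
$$\E\bigl[W_n \given \field_{n-1}\bigr] = W_{n-1} + \sum_{k=0}^{m} a_{m-k}\, \P\bigl(\mathbb{I}_n(W^kB^{m-k})=1 \given \field_{n-1}\bigr),$$
so the whole task is to rewrite the right-hand side, for each sampling scheme, as a polynomial in $W_{n-1}$ and to read off the coefficient of $W_{n-1}^i$. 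The isolated term $W_{n-1}$ will be the source of the Kronecker symbol $\delta_{i,1}$ in both formulas.

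For \mor\ I would substitute \eqref{MuliDrawsLinDistEqn3} and expand the factor $(T_{n-1}-W_{n-1})^{m-k}$ by the ordinary binomial theorem. Writing $i=k+j$ for the total power of $W_{n-1}$ and interchanging the two finite sums (so that $k$ now runs from $0$ to $i$) collects the coefficient of $W_{n-1}^i$; the weight $T_{n-1}^{m-i}/T_{n-1}^m$ simplifies to $T_{n-1}^{-i}$. Matching with the claimed $f_{n,i}$ then only requires the two elementary identities $(-1)^{i-k}=(-1)^i(-1)^k$ and $\binom{m-k}{i-k}=\binom{m-k}{m-i}$. This case is essentially bookkeeping.

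For \mom\ the substitution of \eqref{MuliDrawsLinDistEqn2} replaces ordinary powers by falling factorials, and the crucial tool is the Vandermonde convolution for falling factorials,
$$\fallfak{(x+y)}{n} = \sum_{j=0}^{n} \binom{n}{j}\, \fallfak{x}{j}\, \fallfak{y}{n-j},$$
which I would apply with $x=T_{n-1}$, $y=-W_{n-1}$ and $n=m-k$ to expand $\fallfak{(T_{n-1}-W_{n-1})}{m-k}$. After writing the ratio $\binom{W_{n-1}}{k}\binom{T_{n-1}-W_{n-1}}{m-k}\big/\binom{T_{n-1}}{m}$ in falling-factorial form and cancelling $m!/(k!\,(m-k)!)=\binom{m}{k}$ against $\binom{T_{n-1}}{m}=\fallfak{T_{n-1}}{m}/m!$, the double sum over $k$ and over the convolution index $j$ is reorganized by pulling the factor $\fallfak{T_{n-1}}{j}$ outside and letting $k$ range over $0\le k\le m-j$. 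The resulting inner sum is precisely $p_{m,j}(W_{n-1})$, and extracting $[x^i]\,p_{m,j}(x)$ delivers the stated $f_{n,i}$.

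The degree assertion comes out as a by-product: in \mor\ the top power $W_{n-1}^m$ survives, while in \mom\ the $k$th summand of $p_{m,j}$ has degree $k+(m-k-j)=m-j$, so $p_{m,j}$ has degree $m-j$ and the maximal power $W_{n-1}^m$ arises only from $j=0$. The main obstacle I anticipate is the \mom\ case: recognizing the falling-factorial Vandermonde identity as the correct expansion (the naive binomial theorem is not available, since the arguments are falling factorials rather than powers) and then keeping the interchange of the $k$- and $j$-summations and their index ranges straight, so that the pulled-out factor is exactly $\fallfak{T_{n-1}}{j}$ and the remainder matches $p_{m,j}$.
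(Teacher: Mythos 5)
Your proposal is correct and follows essentially the same route as the paper's proof: take conditional expectations in \eqref{MuliDrawsLinDistEqn1}, expand $(T_{n-1}-W_{n-1})^{m-k}$ by the binomial theorem for \mor, and expand $\fallfak{(T_{n-1}-W_{n-1})}{m-k}$ by the falling-factorial Vandermonde convolution for \mom, then interchange the summations to identify $p_{m,j}$. The index bookkeeping you describe (including $\binom{m-k}{i-k}=\binom{m-k}{m-i}$ and the exchange $\sum_{k}\sum_{j\le m-k}=\sum_{j}\sum_{k\le m-j}$) matches the paper's computation exactly.
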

\begin{proof}
Our starting point is the relation
$$\E\bigl[W_n \given \field_{n-1} \bigr] =W_{n-1} 
        +  \sum_{k=0}^{m} a_{m-k} \, \E\big[\mathbb{I}_{n}(W^kB^{m-k}) \given \field_{n-1}\big].$$
We discuss first the proof for \morp, 
which is simpler. According to~\eqref{MuliDrawsLinDistEqn3} we get
$$\E\bigl[W_n \given \field_{n-1}\bigr]=W_{n-1} +  \sum_{k=0}^{m} a_{m-k} \binom{m}{k}\frac{W_{n-1}^k(T_{n-1}-W_{n-1})^{m-k} }{T_{n-1}^{m}}.$$
Expanding $(T_{n-1}-W_{n-1})^{m-k}$ by the binomial theorem, 
and changing the order of summation yields 
$$\E\bigl[W_n \given \field_{n-1}\bigr]=W_{n-1} +  \frac{1}{T_{n-1}^m}\sum_{i=0}^m T_{n-1}^{m-i}\, W_{n-1}^i(-1)^i\sum_{k=0}^{i}a_{m-k}\binom{m}k\binom{m-k}{m-i}(-1)^k.$$
Consequently, the conditional expectation satisfies the equation
$$\E\bigl[W_n \given \field_{n-1}\bigr]=W_{n-1} +  \sum_{i=0}^m \frac{(-1)^i}{T_{n-1}^{i}}\, W_{n-1}^i\sum_{k=0}^{i}a_{m-k}\binom{m}k\binom{m-k}{m-i}(-1)^k,$$
which gives the claimed formula for $f_{n,i}$.

For \momp, from~(\ref{MuliDrawsLinDistEqn2}) we have
\begin{align*}
\E\bigl[W_n \given \field_{n-1}\bigr]&=W_{n-1} +   \frac{1}{\fallfak{T_{n-1}}m}\sum_{k=0}^{m} a_{m-k}\binom{m}k \fallfak{W_{n-1}}{k}\fallfak{(T_{n-1}-W_{n-1})}{m-k}.
\end{align*}
Next, we use the binomial theorem for the falling factorials to obtain
$$\E\bigl[W_n \given \field_{n-1}\bigr]=W_{n-1} +  \frac{1}{\fallfak{T_{n-1}}m}\sum_{k=0}^{m} a_{m-k}\binom{m}k \fallfak{W_{n-1}}{k}\sum_{j=0}^{m-k}\binom{m-k}j\fallfak{T_{n-1}}{j}\fallfak{(-W_{n-1})}{m-k-j}.$$
Changing the order of summation gives
$$\E\bigl[W_n \given \field_{n-1}\bigr]=W_{n-1} +  \frac{1}{\fallfak{T_{n-1}}m}\sum_{j=0}^{m}\fallfak{T_{n-1}}{j} \sum_{k=0}^{m-j} a_{m-k}\binom{m}k \fallfak{W_{n-1}}{k}\binom{m-k}j\fallfak{(-W_{n-1})}{m-k-j}.$$
The inner sum on the right-hand side is exactly the polynomial $p_{m,j}(W_{n-1})$. The polynomials can be expanded into 
powers of $W_{n-1}$, leading to the stated result.
\end{proof}
%\begin{remark}
%An explicit expression can be obtained using the conversion formul\ae{}\, for the falling factorials $\fallfak{x}k=\sum_{j=0}^{k}(-1)\stir{k}j (-1)^{k-j}x^j$, where $\stir{n}k$ denote the unsigned Stirling numbers of the first kind (also called Stirling cycle numbers), see ~\cite{GraKnuPa}.
%\end{remark}
\begin{proof}[Proof of Proposition~\ref{MuliDrawsLinPropLinear}]
Given the numbers $a_{m-1}$ and $a_m$, 
we need to ensure that the conditional expected value of $W_n$ only involves $W_{n-1}$ and constants, but no higher powers of $W_{n-1}$.
By Lemma~\ref{MuliDrawsLinLemma1}, this is equivalent to the condition $f_{n,i}=0$, $2\le i\le m$.
It remains to show that this condition is fulfilled, if and only if the 
coefficients of a ball replacement matrix satisfy the stated condition $a_k=(m-k)a_{m-1}-(m-k-1)a_m$. 
Note that by collecting the coefficient~$k$ and expressing $a_{m-1}$ in terms of $a_0=m(a_{m-1}-a_m)+a_m$, we have the equivalent condition
$a_k= h k+ a_0,$ with arbitrary $a_0$ and $h$ satisfying tenability.

We start with \mor. By Lemma~\ref{MuliDrawsLinLemma1} the condition $f_{n,i}=0$, $2\le i\le m$, implies 
the following linear equations for the numbers $a_k$, $0\le k\le m-2$, independent of $T_{n-1}$ and thus independent of $n$, too:
$$\sum_{k=2}^{i}a_{m-k}\binom{m}k\binom{m-k}{m-i}(-1)^k=m\binom{m-1}{m-i}a_{m-1}-\binom{m}{m-i}a_m, \qquad 2\le i\le m.$$
This system of linear equations is upper triangular and has a unique solution. The solution can be obtained by Cramer's rule.
However, in order to avoid more involved calculations, 
we can check that the stated solution $a_k=(m-k)a_{m-1}-(m-k-1)a_m$ satisfies the equations by simple algebraic manipulations, which are omitted here. 
%Alternatively, we can argue as follows. Since the expectations are both linear functions of $w=W_{n-1}$---see Subsection~\ref{Sec:PrelimSampling} ---the affinity of $a_k=a_0+kh$ implies that the conditional % expected value of added white balls is also an affine function of $W_{n-1}$. This reasoning is also valid for \mom, where we offer an alternative argument below.
%Using the identities
%$$\sum_{k=2}^{i}\binom{m}k\binom{m-k}{m-i}(-1)^k=\binom{m}{i}(i-1),\qquad
%\sum_{k=2}^{i} k\binom{m}k\binom{m-k}{m-i}(-1)^k=m\binom{m-1}{m-i},$$
%we obtain 
%\begin{align*}
%\sum_{k=2}^{i}(ka_{m-1}-ka_m+a_m)\binom{m}k\binom{m-k}{m-i}(-1)^k&=
%m\binom{m-1}{m-i}a_{m-1}+\binom{m}i\big(-i+(i-1)\big)\\
%&=m\binom{m-1}{m-i}a_{m-1}-\binom{m}{m-i}a_m,
%\end{align*}
%which proves the stated result.
For \mom, by contrast to the previous case,   
the $m-1$ equations $f_{n,i}=0$, $2\le i\le m$,
are not independent of $n$, since 
they involve $T_{n-1}$: 
$$f_{n,i}=\frac{1}{\fallfak{T_{n-1}}m}\sum_{j=0}^{m}
         \fallfak{T_{n-1}}{j}[x^i]\, p_{m,j}(x),$$
$2\le i\le m$. In order to ensure that $f_{n,i}=0$ for all $n$, with $2\le i\le m$, the coefficient $[x^i]\, p_{m,j}(x)$ of the falling factorials $\fallfak{T_{n-1}}{j}$ have to vanish for all $n$. Assume conversely that there exists a largest $j=j_0$, $1\le j\le m$, such that $[x^i]\, p_{m,j}(x)\neq 0$. Then, for large $n$, we have 
$$f_{n,i}=\frac{1}{\fallfak{T_{n-1}}m}\sum_{j=0}^{j_0}
         \fallfak{T_{n-1}}{j}[x^i]\, p_{m,j}(x) 
         =\frac{1}{\fallfak{T_{n-1}}m} (\fallfak{T_{n-1}}{j_0}[x^i]\, p_{m,j_0}(x) + \mathcal{O}(T_{n-1}^{j_0-1}),$$
such that $f_{n,i} \sim \frac{T_{n-1}^{j_0}}{T_{n-1}^m} [x^i]\, p_{m,j_0}(x) \neq 0$. 
Thus, we obtain the system of equations 
$$[x^i]\, p_{m,j}(x)=[x^i] \sum_{k=0}^{m-j} a_{m-k}\binom{m}k 
           \fallfak{x}{k}\binom{m-k}j\fallfak{(-x)}{m-k-j}=0,$$
for $2\le i \le m$ and $0\le j \le m$. This leads to an overdetermined 
system of linear equations for the coefficients $a_k$. 
Instead of writing the whole system, it is sufficient to derive an 
exactly solvable subsystem of equations involving all the coefficients $a_k$, $0\le k\le m$.
In order to do so, we concentrate on the equations arising from the coefficient of $x^{m-j}$. 
This is the highest power of~$x$ in the polynomials $p_{m,j}(x)$. 
We get
\begin{align*}
[x^{m-j}]\, p_{m,j}(x) &=[x^{m-j}]\sum_{k=0}^{m-j} a_{m-k}\binom{m}k \fallfak{x}{k}\binom{m-k}j\fallfak{(-x)}{m-k-j}\\
   &=\sum_{k=0}^{m-j} a_{m-k}\binom{m}k \binom{m-k}j(-1)^{m-k-j},
\end{align*}
$0\le j\le m$. We allow $a_{m-1}$, and $a_m$ to be freely chosen. Setting $j=m-i$ leads 
to a the system of equations
for the numbers $a_k$ with $0\le k\le m-2$:
$$\sum_{k=0}^{i}a_{m-k}\binom{m}k\binom{m-k}{m-i}(-1)^{m-k}=0, \qquad 2\le i\le m.$$
This system coincides with the system of equations previously derived 
for sampling with replacement. 
It has the stated unique solution. Hence, the overdetermined system of equations 
$$[x^i]\, p_{m,j}(x)=0,\qquad 2\le i \le m, \qquad 0\le j\le m,$$
has either exactly one solution or no solution at all. 
It remains to show that coefficients satisfying the affinity condition $a_k=(m-k)a_{m-1}-(m-k-1)a_m$
lead to a solution. Starting from~\eqref{MuliDrawsLinDistEqn2} we get
$$\E\bigl[W_n \given \field_{n-1}\bigr]=W_{n-1} +  \sum_{k=0}^{m} \big(k(a_{m-1}-a_m)+a_m\big)\frac{\binom{W_{n-1}}{k}\binom{T_{n-1}-W_{n-1}}{m-k} }{\binom{T_{n-1}}{m}}.$$
Next, we use Vandermonde's convolution formula
$$\sum_{k=0}^{m}\binom{r}{k}\binom{s}{m-k}=\binom{r+s}{m},$$
and obtain 
$$\E\bigl[W_n \given \field_{n-1}\bigr] =W_{n-1} +  W_{n-1}(a_{m-1}-a_m)\frac{\binom{T_{n-1}-1}{m-1}}{\binom{T_{n-1}}{m}}+a_m.$$
\end{proof}

\subsection{Expected value and second moment}
Next, we generalize the result of Bagchi and Pal~\cite{Bagchi1985} for the expected value and the second moment, when drawing a single ball (the case $m=1$) to balanced affine urn models with multiple drawings. In order to state our result we introduce the quantity $g_n$ given by
\begin{align}
\label{MuliDrawsLinValueGn}
g_n &=\prod_{j=0}^{n-1}\frac{T_j}{T_j+m(a_{m-1}-a_m)}%=\prod_{j=0}^{n-1}\frac{T_0+j\sigma}{T_0+j\sigma+m(a_{m-1}-a_m)}
= \frac{\binom{n-1+\frac{T_0}\sigma}{n}}{\binom{n-1+\frac{T_0+m(a_{m-1}-a_m)}\sigma}{n}} =\frac{\Gamma(n+\frac{T_0}\sigma)\, \Gamma(\frac{T_0+m(a_{m-1}-a_m)}\sigma)}{\Gamma(\frac{T_0}\sigma)\, \Gamma(n+\frac{T_0+m(a_{m-1}-a_m)}\sigma )}.
\end{align}\begin{prop}
\label{MuliDrawsLinProp1}
The expected value of the random variable $W_{n}$, counting the number 
of white balls in a two-color balanced affine urn model with multiple drawings, is for both sampling models 
$\mathcal{M}$ and $\mathcal{R}$ given by $\E[W_{n}]=\frac{a_m}{g_n}\sum_{j=1}^{n}g_j +W_0\frac{1}{g_n}$, with $g_n$ as given stated above in~\eqref{MuliDrawsLinValueGn}.
For $\frac{m(a_{m-1}-a_m)}\sigma<1$, we have the closed form expression
$$\E[W_{n}]= \frac{a_m(n+\frac{T_0}\sigma)}{1-\frac{m(a_{m-1}-a_m)}\sigma} + \Big(W_0-\frac{\frac{a_mT_0}\sigma}{1-\frac{m(a_{m-1}-a_m)}\sigma}\Big)\frac{\binom{n-1+\frac{T_0+m(a_{m-1}-a_m)}\sigma}{n}}{\binom{n-1
+\frac{T_0}\sigma}{n}},$$
as well as the asymptotic expansion 
\begin{align*}
\E[W_{n}] &=
 \frac{a_m \sigma }{\sigma-m(a_{m-1}-a_m)}\, n \\
 & \qquad {} + \Big( W_0-\frac{\frac{a_mT_0}\sigma}{1-\frac{m(a_{m-1}-a_m)}\sigma}\Big) \frac{\Gamma(\frac{T_0}\sigma)}{\Gamma(\frac{T_0+m(a_{m-1}-a_m)}\sigma)} \, n^{\frac{m(a_{m-1}-a_m)}\sigma} 
+ \Gro(1),
\end{align*}
Moreover, for $\frac{m(a_{m-1}-a_m)}\sigma=1$ we obtain $\E[W_{n}]=W_0\frac{n\sigma +T_0}{T_0}$.
\end{prop}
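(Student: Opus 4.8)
The plan is to convert the conditional affine relation of Proposition~\ref{MuliDrawsLinPropLinear} into a deterministic first-order recurrence for the sequence $e_n=\E[W_n]$, solve it by an integrating factor, and then read off the three claimed forms. Since the urn is balanced, $T_{n-1}=\sigma(n-1)+T_0$ is deterministic, so $\alpha_n=\frac{T_{n-1}+m(a_{m-1}-a_m)}{T_{n-1}}$ is a deterministic constant. Taking expectations in $\E[W_n\given\field_{n-1}]=\alpha_n W_{n-1}+a_m$ and invoking the tower property gives $e_n=\alpha_n e_{n-1}+a_m$ for $n\ge 1$, with $e_0=W_0$. Writing $c=m(a_{m-1}-a_m)$ for brevity, the quantity $g_n$ in~\eqref{MuliDrawsLinValueGn} is exactly $\prod_{j=1}^n\alpha_j^{-1}$, so $g_n$ is the natural integrating factor.

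First I would multiply the recurrence by $g_n$. The identity $g_n\alpha_n=g_{n-1}$ (immediate from the product form) collapses the recurrence to $g_n e_n=g_{n-1}e_{n-1}+a_m g_n$, which telescopes to $g_n e_n=g_0 e_0+a_m\sum_{j=1}^n g_j=W_0+a_m\sum_{j=1}^n g_j$, since $g_0=1$. Dividing by $g_n$ yields the first claimed formula $\E[W_n]=\frac{a_m}{g_n}\sum_{j=1}^n g_j+\frac{W_0}{g_n}$, valid for both sampling schemes because Proposition~\ref{MuliDrawsLinPropLinear} supplies the same $\alpha_n$ and $\beta_n=a_m$ in each case.

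The crux is a closed form for $\sum_{j=1}^n g_j$ when $c/\sigma<1$. Here I would prove the telescoping identity
\[
\sum_{j=1}^n g_j=\frac{(n+\tfrac{T_0}\sigma)\,g_n-\tfrac{T_0}\sigma}{1-\frac{c}\sigma}
\]
by induction on $n$, with the empty-sum base case $n=0$. The inductive step reduces to checking $(n+\tfrac{T_0}\sigma)g_n-(n-1+\tfrac{T_0}\sigma)g_{n-1}=(1-\tfrac c\sigma)g_n$, which follows at once after substituting $g_{n-1}=g_n\frac{T_{n-1}+c}{T_{n-1}}$ and recognizing $n-1+\tfrac{T_0}\sigma=\tfrac{T_{n-1}}\sigma$ together with $n+\tfrac{T_0}\sigma=\tfrac{T_{n-1}+\sigma}\sigma$; the bracketed difference then produces the factor $\sigma-c$. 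Substituting this identity into the formula of the previous paragraph and simplifying gives the stated closed form, noting that the binomial ratio appearing there is precisely $1/g_n$. The asymptotic expansion is then routine: Stirling's approximation applied to the Gamma-form of $g_n$ gives $g_n\sim\frac{\Gamma((T_0+c)/\sigma)}{\Gamma(T_0/\sigma)}\,n^{-c/\sigma}$, hence $1/g_n\sim\frac{\Gamma(T_0/\sigma)}{\Gamma((T_0+c)/\sigma)}\,n^{c/\sigma}$; inserting this into the closed form isolates the linear leading term $\frac{a_m\sigma}{\sigma-c}\,n$, the fluctuation of order $n^{c/\sigma}$, and a $\Gro(1)$ remainder.

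Finally, the boundary case $c/\sigma=1$ must be handled separately because the denominator $1-c/\sigma$ vanishes. Here $c=\sigma$, so $T_j+c=T_{j+1}$ and the product defining $g_n$ telescopes to $g_n=T_0/T_n$. I would also observe that tenability forces $a_m=0$ in this regime: the affinity condition gives $a_0=a_m+c=a_m+\sigma$, whence $b_0=\sigma-a_0=-a_m$, and the non-negativity constraint $b_0\ge 0$ together with $a_m\ge 0$ leaves only $a_m=0$. The sum term in the general formula therefore drops out, and $\E[W_n]=W_0/g_n=W_0\,T_n/T_0=W_0\frac{n\sigma+T_0}{T_0}$. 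The main obstacle is the summation identity of the third paragraph; once that telescoping is found everything else is bookkeeping, and the only subtlety is remembering that the $c/\sigma=1$ case is genuinely singular and relies on the tenability argument to kill the otherwise divergent harmonic contribution $a_m T_n\sum_{j=1}^n 1/T_j$.
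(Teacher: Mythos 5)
Your proposal is correct and follows essentially the same route as the paper: taking expectations in the affine relation, using $g_n$ as the integrating factor to telescope, evaluating $\sum_{j=1}^n g_j$ in closed form, handling $\Lambda=1$ via the tenability argument forcing $a_m=b_0=0$, and finishing with Stirling's formula. The only cosmetic difference is that you verify the summation identity by an explicit telescoping induction, whereas the paper cites the equivalent ratio-of-binomial-coefficients summation formula; both are the same identity.
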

%
%The number of white balls dominates, $\E[W_n]/T_n \sim 1$, if $a_m>0$
%and the coefficients $a_{m-1},a_m$ and the balance satisfy the condition
%$$\frac{a_m }{\sigma-m(a_{m-1}-a_m)}=1,\qquad\text{or}\qquad a_{m-1}=a_m-\frac1m \, a_m+\frac1m\,\sigma.$$
%By Proposition~\ref{MuliDrawsLinPropLinear}, we observe that in this case $a_0=ma_{m-1}-(m-1)a_m=\sigma$, such that $b_0=\sigma-a_0=0$. 
%The case $b_0=0$ and $a_m>0$ for the black balls corresponds to the case $a_m=0$ and $b_0>0$, for the white balls. 
%Since $B_n=T_n-W_n$ this implies that, without loss of generality, 
%we can restrict our attention to the cases where
%$\frac{a_m }{\sigma-m(a_{m-1}-a_m)}<1$, or $a_m=0$. For instance, we 
%look at
%$\begin{pmatrix}
%    2& 2   \\
%    1  & 3\\
%    0&4
%    \end{pmatrix}$, 
%instead of the corresponding case 
%$  \begin{pmatrix}
%    4& 0   \\
%    3  & 1\\
%    2&2
%    \end{pmatrix}$.
%If both $a_m=b_0=0$, the so-called P\'olya urn model as treated in~\cite{ChenKu2013+}, then a slightly different situation occurs and the number of white balls does not dominate anymore. The corresponding limit laws $a_m=0$ or $b_0=0$ (or both) are discussed in~\cite{KuMaPan201314}.

\begin{proof}[Proof of Proposition~\ref{MuliDrawsLinProp1}]
From Proposition~\ref{MuliDrawsLinPropLinear} we get
\begin{equation}
\label{MuliDrawsLinExpected1}
\E[W_n]=\bigg(\frac{T_{n-1}
           +m(a_{m-1}-a_m)}{T_{n-1}}\bigg) \, \E[W_{n-1}]+a_m,\qquad n\ge 1. 
\end{equation}
Multiplication with $g_n$ as defined in~\eqref{MuliDrawsLinValueGn} gives the recurrence relation
$$g_n\, \E[W_n]=g_{n-1}\, \E[W_{n-1}] + g_n a_m, $$
such that
\begin{equation}
\label{MuliDrawsLinExpectedCorrect}
\E[W_n]=\frac{a_m}{g_n}\sum_{j=1}^{n}g_j + W_0\frac{g_0}{g_n}=\frac{a_m}{g_n}\sum_{j=1}^{n}g_j +W_0\frac{1}{g_n}.
\end{equation}
Applying the summation formula 
$$\sum_{k=1}^{s}\frac{\binom{k+x}{k}}{\binom{k+y}k}
           =\frac{(s+1+y)\binom{s+1+x}{s+1}}{(x+1-y)\binom{s+1+y}s+1}
           -\frac{x+1}{x+1-y},$$
to the sum involving $g_n$, which has the form
$\binom{n+x}{n}/\binom{n+y}{n}$, with $x=\frac{T_0}\sigma-1$ and $y= \frac{T_0}\sigma-1+\frac{m(a_{m-1}-a_m)}\sigma$, 
we obtain the result, valid for $\frac{m(a_{m-1}-a_m)}\sigma<1$.
For $\frac{m(a_{m-1}-a_m)}\sigma=1$, we observe that by the tenability assumption on the urn, we obtain 
for both sampling models the conditions $a_m\ge 0$, and also $b_0\ge 0$. Thus, we get from Proposition~\ref{MuliDrawsLinPropLinear}
$$\sigma=a_0+b_0= m(a_{m-1}-a_m)+a_m+b_0\ge m(a_{m-1}-a_m),$$ 
such that $a_m=b_0=0$, and the result follows directly from~\eqref{MuliDrawsLinExpectedCorrect}. 

In order to obtain asymptotic expansions, 
we only need Stirling's formula for the Gamma function:
%Removed Eq. number
$$\Gamma(z) = \Bigl(\frac{z}{e}\Bigr)^{z}\frac{\sqrt{2\pi }}{\sqrt{z}}%
    \Bigl(1+\frac{1}{12z}+\frac{1}{288z^{2}}+\Gro\Bigl(\frac{1}{z^{3}}\Bigr)\Bigr), \qquad |z| \to \infty.$$
Hence, we obtain 
$$\frac1{g_n}= \frac{\Gamma(\frac{T_0}\sigma)\, \Gamma(n+\frac{T_0+m(a_{m-1}-a_m)}\sigma)}{\Gamma(n+\frac{T_0}\sigma)\, \Gamma(\frac{T_0+m(a_{m-1}-a_m)}\sigma)} 
= \frac{\Gamma(\frac{T_0}\sigma)}{\Gamma(\frac{T_0+m(a_{m-1}-a_m)}\sigma)}n^{\frac{m(a_{m-1}-a_m)}\sigma}\Big(1+\Gro\Big(\frac 1 n\Big)\Big),$$
yielding the stated result.
\end{proof}
\begin{prop}
\label{MuliDrawsLinProp2}
For balanced affine urn schemes, the second moment of $W_n$ is
$$\E[W_n^2] =
\frac{\binom{n-1+\lambda_1}{n}\binom{n-1+\lambda_2}{n}}{\binom{n-1+\frac{T_0}\sigma}{n}\binom{n-1+\frac{T_0-1}\sigma}{n}}
 \bigg(W_0^2+\sum_{j=1}^{n}\frac{(\beta_j\E[W_{j-1}]+a_m^2)\binom{j-1+\frac{T_0}\sigma}{j}\binom{j-1+\frac{T_0-1}\sigma}{j}}{\binom{j-1+\lambda_1}{j}\binom{j-1
 +\lambda_2}{j}}\bigg)$$
for \mom, with $\lambda_{1,2}=\frac{m(a_{m-1}-a_m)+T_0-\frac12 \pm \frac12 \sqrt{1+4m(a_{m-1}-a_m)(a_{m-1}-a_m+1)}}{\sigma}$,
and 
\begin{equation}
\label{MuliDrawsVarBetaModelM}
\beta_n=(a_{m-1}-a_m)^2\Big(\frac{m}{T_{n-1}}-\frac{\fallfak{m}2}{\fallfak{T_{n-1}}2}\Big) + \frac{2m a_m(a_{m-1}-a_m)}{T_{n-1}}+2a_m.
\end{equation}
For \morp, the second moment of $W_n$ is
$$\E[W_n^2]=\frac{\binom{n-1+\mu_1}{n-1}\binom{n-1+\mu_2}{n}}{\binom{n-1+\frac{T_0}\sigma}{n}^2}
\bigg(W_0^2+\sum_{j=1}^{n}\frac{(\beta_j\E[W_{j-1})+a_m^2]\binom{j-1+\frac{T_0}\sigma}{j}^2}{\binom{j-1+\mu_1}{j}\binom{j-1+\mu_2}{j}}\bigg)
\bigg),$$
with $\mu_{1,2}=\frac{m(a_{m-1}-a_m)+T_0 \pm  (a_{m-1}-a_m)\sqrt{m}}{\sigma}$, and 
\begin{equation}
\label{MuliDrawsVarBetaModelR}
\beta_n=\frac{(a_{m-1}-a_m)^2m}{T_{n-1}} + \frac{2m a_m(a_{m-1}-a_m)}{T_{n-1}}+2a_m.
\end{equation}
\end{prop}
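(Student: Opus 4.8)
The plan is to derive a first-order linear recurrence for $\E[W_n^2]$ and solve it by a summation factor, mirroring the proof of Proposition~\ref{MuliDrawsLinProp1}. The crucial structural input is the affinity condition $a_{m-k}=k(a_{m-1}-a_m)+a_m$ from Proposition~\ref{MuliDrawsLinPropLinear}: writing $X_n$ for the number of white balls in the sample drawn at step $n$ and setting $c:=a_{m-1}-a_m$, $d:=a_m$, the per-step increment becomes \emph{affine} in the sample count, namely $\backward W_n=W_n-W_{n-1}=a_{m-X_n}=cX_n+d$. Squaring and conditioning gives $\E[W_n^2\given\field_{n-1}]=W_{n-1}^2+2W_{n-1}\bigl(c\,\E[X_n\given\field_{n-1}]+d\bigr)+c^2\E[X_n^2\given\field_{n-1}]+2cd\,\E[X_n\given\field_{n-1}]+d^2$, so everything reduces to the first two conditional moments of $X_n$, which under \momp\ is $\Hypergeom$ with parameters $T_{n-1},W_{n-1},m$ and under \morp\ is binomial with parameters $m,W_{n-1}/T_{n-1}$. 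Both sets of moments are recorded in Section~\ref{Sec:PrelimSampling}.

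Substituting those moment formulas and collecting powers of $W_{n-1}$, I expect a recurrence of the shape $\E[W_n^2\given\field_{n-1}]=\gamma_n W_{n-1}^2+\beta_n W_{n-1}+a_m^2$, where the constant $d^2=a_m^2$ is the only term free of $W_{n-1}$, the linear coefficient $\beta_n$ is precisely \eqref{MuliDrawsVarBetaModelM} for \momp\ and \eqref{MuliDrawsVarBetaModelR} for \morp, and the quadratic coefficient is $\gamma_n=1+\frac{2mc}{T_{n-1}}+c^2\frac{\fallfak{m}2}{\fallfak{T_{n-1}}2}$ for \momp\ resp. $\gamma_n=1+\frac{2mc}{T_{n-1}}+\frac{c^2m(m-1)}{T_{n-1}^2}$ for \morp. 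Reading off $\beta_n$ and the constant is routine; taking unconditional expectations turns this into $\E[W_n^2]=\gamma_n\E[W_{n-1}^2]+\beta_n\E[W_{n-1}]+a_m^2$, and $\E[W_{n-1}]$ is already available from Proposition~\ref{MuliDrawsLinProp1}.

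To solve the recurrence I would introduce a summation factor $G_n$ determined by $G_n\gamma_n=G_{n-1}$, $G_0=1$, so that $G_n\E[W_n^2]=G_{n-1}\E[W_{n-1}^2]+G_n(\beta_n\E[W_{n-1}]+a_m^2)$ telescopes to $\E[W_n^2]=\frac1{G_n}\bigl(W_0^2+\sum_{j=1}^n G_j(\beta_j\E[W_{j-1}]+a_m^2)\bigr)$. The heart of the matter is evaluating $G_n=\prod_{j=1}^n\gamma_j^{-1}$ in closed form. Writing $T_{j-1}=\sigma(j-1)+T_0$ and clearing denominators, $\gamma_j$ is a ratio of two quadratics in $T_{j-1}$: the denominator is $\fallfak{T_{j-1}}2=T_{j-1}(T_{j-1}-1)$ for \momp\ and $T_{j-1}^2$ for \morp, while the numerator factors as $(T_{j-1}+\rho_1)(T_{j-1}+\rho_2)$. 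A short discriminant computation produces the surd $\sqrt{1+4mc(c+1)}$ under \momp\ and $\sqrt{4mc^2}=2c\sqrt m$ under \morp, so that $\tfrac{T_0+\rho_i}\sigma$ equals exactly the eigenvalue $\lambda_i$ (resp.\ $\mu_i$) in the statement. Each linear factor $T_{j-1}+\rho=\sigma\bigl(j-1+\tfrac{T_0+\rho}\sigma\bigr)$ then telescopes through Gamma functions via $\prod_{j=1}^n\bigl(j-1+\tfrac{T_0+\rho}\sigma\bigr)=n!\binom{n-1+\frac{T_0+\rho}\sigma}{n}$, and the factors $\sigma$ cancel between numerator and denominator; assembling the four products yields $1/G_n$ as the stated binomial prefactor and $G_j$ as the summand.

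The main obstacle is the bookkeeping in the \momp\ case: because the hypergeometric second moment carries the denominator $\fallfak{T_{n-1}}2$ and the term $\fallfak{m}2/\fallfak{T_{n-1}}2$, one must verify both that the linear coefficient collapses to exactly \eqref{MuliDrawsVarBetaModelM} and that the numerator of $\gamma_j$ has discriminant $1+4mc(c+1)$ so its roots reproduce $\lambda_{1,2}$. The binomial (\morp) case is parallel but cleaner, the denominator $T_{n-1}^2$ producing $\binom{n-1+\frac{T_0}\sigma}{n}^2$ and the simpler surd $2c\sqrt m$ giving $\mu_{1,2}$; in particular both Gamma telescopings there produce $\binom{n-1+\mu_i}{n}$, so the exponent $n-1$ on one factor in the displayed \morp\ formula should read $n$.
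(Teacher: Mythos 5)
Your proposal follows essentially the same route as the paper: condition on $\field_{n-1}$, use the affinity condition to reduce the increment to the first two moments of the hypergeometric (resp.\ binomial) sample count, obtain the linear recurrence $\E[W_n^2]=\alpha_n\E[W_{n-1}^2]+\beta_n\E[W_{n-1}]+a_m^2$, and solve it with a summation factor whose closed form comes from factoring $\alpha_n$ into linear terms with the discriminants $1+4mc(c+1)$ and $4mc^2$; all your coefficients and roots check out against the paper's. Your observation that the exponent $n-1$ in the first binomial of the \morp{} prefactor should read $n$ is a correct identification of a typo in the statement.
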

\begin{proof}
We use the stochastic recurrence~\eqref{MuliDrawsLinDistEqn4}, with $s=2$, 
and obtain for the conditional expectation the equation
$$\E\bigl[W_n^2\given \field_{n-1}\bigr]=W_{n-1}^2 + \sum_{k=0}^{m}(2W_{n-1}a_{m-k}+a_{m-k}^2)
              \, \E\bigl[\mathbb{I}_{n}(W^kB^{m-k}) \given \field_{n-1}\big].$$
By Proposition~\ref{MuliDrawsLinPropLinear} and the affinity condition, we further get 
\begin{equation}
\begin{split}
\label{MuliDrawOmegaJZwei1}
\E\bigl[W_n^2\given \field_{n-1}\bigr]&=W_{n-1}^2 + a_m(2W_{n-1}+a_m)\\
&\qquad {}+\sum_{k=0}^{m}\big(k^2(a_{m-1}-a_m)^2+2k(a_{m-1}-a_m)(a_m+W_{n-1})\big)\\
&\qquad \qquad\qquad {} \times\E\bigl[\mathbb{I}_{n}(W^kB^{m-k}) \given \field_{n-1}\big].
\end{split}
\end{equation}
The sums depend on the particular sampling model. 
According to~\eqref{MuliDrawsLinDistEqn2}, for \momp, the number of drawn white balls 
in the sample of size $m$ is given by a hypergeometric distribution 
with parameters $T_{n-1}$, $W_{n-1}$ and $m$. Alternatively, for 
 \morp, the number of drawn white balls 
in the sample of size $m$ is given by a binomial distribution with parameters 
$m$ and $W_{n-1}/T_{n-1}$.

We take expectations and use the results of Section~\ref{Sec:Prelim} to simplify the sums. 
Consequently, we obtain for both models a linear recurrence relation of the form
$$\E[W_{n}^2] = \alpha_n\, \E[W_{n-1}^2] + \beta_n\,  \E[W_{n-1}] 
       + \gamma_n,\qquad n\ge 1,$$
with $\E[W_{n}]$ as given in Proposition~\ref{MuliDrawsLinProp1}.
For \momp, the sequences $\alpha_n$ and $\gamma_n$ are given by
$$\alpha_n=1+\frac{(a_{m-1}-a_m)^2\fallfak{m}2}{\fallfak{T_{n-1}}2}+\frac{2(a_{m-1}-a_m)m}{T_{n-1}},\qquad\gamma_n=a_m^2,$$
and $\beta_n$ as stated in~\eqref{MuliDrawsVarBetaModelM}.
For \mor, we have
$$\alpha_n=1+\frac{(a_{m-1}-a_m)^2\fallfak{m}2}{T_{n-1}^2}+\frac{2(a_{m-1}-a_m)m}{T_{n-1}},\qquad \gamma_n=a_m^2,$$
and $\beta_n$ as stated in~\eqref{MuliDrawsVarBetaModelR}. 
The recurrence relation for $\E[W_n^2]$ is readily solved in a manner similar to that we used to solve
~\eqref{MuliDrawsLinExpected1}, and we obtain 
$$\E[W_n^2] = \bigg(\prod_{\ell=1}^{n}\alpha_\ell\bigg)\bigg(W_0^2+\sum_{j=1}^{n}\frac{\beta_j\E[W_{j-1}] +\gamma_j}{\prod_{\ell=1}^{j}\alpha_\ell}\bigg),$$
with $\E[W_n]$ given by Proposition~\ref{MuliDrawsLinProp1}. Finally, we simplify the products $\prod_{\ell=1}^{n}\alpha_\ell$ by viewing $\alpha_n$ as a rational function
in the variable $n$, and factorizing it into linear terms of the forms 
$$\alpha_n=\frac{(n-1+\lambda_1)(n-1+\lambda_2)}{(n-1+\frac{T_0}\sigma)(n-1+\frac{T_0-1}\sigma)},\qquad
\text{and}\qquad
\alpha_n=\frac{(n-1+\mu_1)(n-1+\mu_2)}{(n-1+\frac{T_0}\sigma)^2},$$
for models $\mathcal{M}$ and $\mathcal{R}$, respectively.
\end{proof}

\subsection{Martingale structure}
Next, we deduce from the linear affine structure of the conditional expected value of $W_n$ and the previous result for the expected value the following result. 

\begin{prop}
\label{Prop:caligraphicW}
For balanced affine urn schemes with $a_m\neq 0$, the centered random variable 
$$\mathcal{W}_n=g_n(W_n-\E[W_n])=g_nW_n- a_m \sum_{j=1}^{n}g_j - W_0,$$ with $g_n$ as defined in~\eqref{MuliDrawsLinValueGn}, %or more explicitly
%$$\mathcal{W}_n=  \frac{\Gamma(n+\frac{T_0}\sigma)\, \Gamma(\frac{T_0+m(a_{m-1}-a_m)}\sigma)}{\Gamma(\frac{T_0}\sigma)\, \Gamma(n+\frac{T_0+m(a_{m-1}-a_m)}\sigma)} 
%\, W_n- a_m \sum_{j=1}^{n}\frac{\Gamma(j+\frac{T_0}\sigma)\, \Gamma(\frac{T_0+m(a_{m-1}-a_m)}\sigma)}{\Gamma(\frac{T_0}\sigma)\, \Gamma(j+\frac{T_0+m(a_{m-1}-a_m)}\sigma)} - W_0,$$
is a martingale with respect to the natural filtration: $\E[\mathcal{W}_n\given \field_{n-1}]=\mathcal{W}_{n-1}$, $n\ge 1$, and $\mathcal{W}_0=0$.

For balanced affine urn schemes with $a_m=0$, the random variable $\mathfrak{W}_n=g_nW_n$ is a non-negative martingale and converges almost surely to a limit $\mathfrak{W}_\infty$. 
\end{prop}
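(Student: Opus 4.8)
The plan is to drive everything from the affine recurrence $\E[W_n\given\field_{n-1}]=\alpha_nW_{n-1}+\beta_n$ of Proposition~\ref{MuliDrawsLinPropLinear} (with $\beta_n=a_m$), combined with the single telescoping identity $g_n\alpha_n=g_{n-1}$. This identity is immediate from the product in~\eqref{MuliDrawsLinValueGn}, since $g_n/g_{n-1}=T_{n-1}/\bigl(T_{n-1}+m(a_{m-1}-a_m)\bigr)=1/\alpha_n$; it is exactly what makes the weighting by $g_n$ cancel the multiplicative factor $\alpha_n$ in the conditional expectation. Before computing, I would record integrability: tenability gives $0\le W_n\le T_n=\sigma n+T_0$, and $g_n$ is deterministic and strictly positive (the standing assumption $T_0+m(a_{m-1}-a_m)>0$ together with the monotonicity of $T_j$ guarantees $T_j+m(a_{m-1}-a_m)>0$ for all $j$), so both $\mathcal{W}_n$ and $\mathfrak{W}_n$ are bounded, hence $\field_n$-measurable and integrable.

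For the case $a_m\neq 0$, I would condition and substitute the recurrence:
$$\E[\mathcal{W}_n\given\field_{n-1}]=g_n(\alpha_nW_{n-1}+a_m)-a_m\sum_{j=1}^n g_j-W_0.$$
Applying $g_n\alpha_n=g_{n-1}$ and splitting off the last summand $a_mg_n$ from the sum, the right-hand side collapses to $g_{n-1}W_{n-1}-a_m\sum_{j=1}^{n-1}g_j-W_0=\mathcal{W}_{n-1}$, which is the martingale property. The two displayed forms of $\mathcal{W}_n$ agree because $g_n\E[W_n]=a_m\sum_{j=1}^n g_j+W_0$ by~\eqref{MuliDrawsLinExpectedCorrect}, and $\mathcal{W}_0=0$ follows from the empty product $g_0=1$.

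For the case $a_m=0$ the centering term vanishes and $\beta_n=0$, so the recurrence reads $\E[W_n\given\field_{n-1}]=\alpha_nW_{n-1}$; the same one-line computation gives $\E[\mathfrak{W}_n\given\field_{n-1}]=g_n\alpha_nW_{n-1}=g_{n-1}W_{n-1}=\mathfrak{W}_{n-1}$, so $\mathfrak{W}_n$ is a martingale, non-negative since $W_n\ge 0$ and $g_n>0$. As any non-negative martingale is bounded in $L^1$ (here $\E[\mathfrak{W}_n]=\E[\mathfrak{W}_0]=W_0$ for every $n$), the martingale convergence theorem will deliver almost-sure convergence to an integrable limit $\mathfrak{W}_\infty$.

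I expect no genuine obstacle: the whole argument is a direct conditional-expectation computation powered by $g_n\alpha_n=g_{n-1}$, plus the standard convergence theorem for the almost-sure claim. The only points needing a word of care are the strict positivity of $g_n$ (which is where the standing assumption made just after Proposition~\ref{MuliDrawsLinPropLinear} enters) and the routine integrability and measurability bookkeeping.
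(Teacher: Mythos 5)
Your proof is correct and follows essentially the same route as the paper: weight by $g_n$, use the telescoping identity $g_n\alpha_n=g_{n-1}$ coming from the product definition~\eqref{MuliDrawsLinValueGn} to collapse the conditional expectation, and invoke the convergence theorem for non-negative martingales when $a_m=0$. The extra remarks on integrability and strict positivity of $g_n$ are sound bookkeeping that the paper leaves implicit.
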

\begin{proof}[Proof of Proposition~\ref{Prop:caligraphicW}] 
By Proposition~\ref{MuliDrawsLinPropLinear} the conditional expectation is given by
\begin{equation}
\label{MuliDrawsOmegaJEins}
\E\bigl[W_n \given \field_{n-1}\bigr]=W_{n-1}\bigg(\frac{T_{n-1}+m(a_{m-1}-a_m)}{T_{n-1}}\bigg)+a_m,
\end{equation}
for $n\ge 1$. As in the proof of Proposition~\ref{MuliDrawsLinProp1},
we obtain
$$\E\bigl[g_nW_n\given \field_{n-1}\bigr]=g_{n-1}W_{n-1}+g_n a_m, \qquad n\ge 1,$$
By definition
$$\mathcal{W}_n=g_n  W_n - a_m \sum_{j=1}^{n}g_j - W_0,$$
and we get the representation
$$g_n\, \E\bigl[W_n \given \field_{n-1}\bigr]- a_m \sum_{j=1}^{n}g_j-W_0
          = g_{n-1}W_{n-1}- a_m\sum_{j=1}^{n-1}g_j-W_0,$$
such that 
$$\E\bigl[\mathcal{W}_n\given \field_{n-1}\bigr]=\mathcal{W}_{n-1}.$$
We also note that $\mathcal{W}_0=g_0(W_0-\E[W_0])=0$, and so $\E[\mathcal{W}_n]=0$. Moreover,
$$\E\bigl[|\mathcal{W}_n|\bigl]=g_n\, \E\bigl[|W_n-\E[W_n]|\bigr].$$

For $a_m=0$, we note that $W_n\ge 0$ and also $\mathfrak{W}_n=g_nW_n\ge 0$. By martingale theory, $\mathfrak{W}_n$  converges almost surely to a limit: 
$\mathfrak{W}_n\almostsure \mathfrak{W}_\infty$.
\end{proof}

\section{Limit theorems}
In this section, we discuss limit theorems for the number of white balls. 
Our limit theorems are valid for arbitrary $m\ge 1$, unifying the earlier observed phenomena 
for the case $m=1$, and covering new such cases, as well as extending the result to
larger sample size. 
For balanced urn models and a single ball in the sample,
one considers the ball replacement matrix $\matM=\begin{pmatrix}a&b\\c&d\end{pmatrix}$, 
with balance factor $\sigma$, with eigenvalues $\Lambda_1=\sigma$ and $\Lambda_2=a-c$. For this classic case,
there is a known trichotomy~\cite{Chauvin1,Chauvin2,Jan2004,Jan2006,NeiningerKnape}: 
(1) triangular urn models with a nongaussian limit for $c=0$ (or $b=0$), 
(2) the so-called \emph{small urns} with a Gaussian limit for $c> 0$ and $\Lambda_2/\Lambda_1\le \frac12$, and 
(3) the so-called \emph{large urns} with a nongaussian limit for $c> 0$ and $\Lambda_2/\Lambda_1>\frac12$.
Note that owing to the balance, the urn actually has only three parameters $a,c$ and $\sigma$.
The terms ``small urns" and ``large urn" were used by other researchers~\cite{Chauvin1}.
We prefer to think of the ratio of eigenvalues as an index and refer to urns
with small versus large index. It is the index that can be large or small,
not the physical container (urn, box, etc.).

For urn models with multiple drawings and affine expectation,
we obtain a similar characterization. By Proposition~\ref{MuliDrawsLinPropLinear}, our class of 
urns is determined by $a_{m-1},a_m$ and the balance factor~$\sigma$, satisfying
the affinity condition $a_k=(m-k)(a_{m-1}-a_m)+a_m$, $0\le k\le m$. 
We call $\matA=
\begin{pmatrix}
a_{m-1} & b_{m-1}\\
a_m&b_m\\
\end{pmatrix}$ the reduced ball replacement matrix. For the affine subclass of balanced urn models,
the eigenvalues of $\matA$ are $\Lambda_1=\sigma$ and $\Lambda_2=a_{m-1}-a_m$. 
It turns out that the behaviour of the urn critically depends on the \emph{urn index} $\Lambda$ given by 
the ratio $\Lambda_2/\Lambda_1$ of the two eigenvalues of $\matA$ times the sample size $m$:
$$\Lambda = \Lambda(m, \sigma) := \frac m \sigma (a_{m-1}-a_m).$$ 
This parameter governs the growth of the second largest term in the asymptotic expansion of the expected value.  
For instance, in terms of this index, the expectation
in Proposition~\ref{MuliDrawsLinProp1} is
$$\E[W_n] = \frac {a_m} {1-\Lambda}\, n + \Gro(n^\Lambda) + \Gro(1).$$
In the following we obtain a central limit theorem for urn models with ``small index" 
$\Lambda< \frac 1 2$ and ``critical index'' $\Lambda=\frac12$. Note that the case $\Lambda=0$ is excluded from our considerations because it leads to $a_m=a_{m-1}$ and by the affinity condition to $a_k=a_m$, $0\le k\le m$; thus we have deterministic development: $W_n=W_0 + a_m n$. We also obtain almost sure and $L_2$-convergence for ``large index'' urns $\Lambda>\frac 1 2$.
We call an urn model triangular if $a_m=0$ or $b_0=0$ (or both). We already obtained for $a_m=0$ almost sure convergence in Proposition~\ref{Prop:caligraphicW}. 
Since $B_n=T_n-W_n$, we can reduce the case $b_0=0$ and $a_m\ge 0$ by reversing the colors to $a_m=0$ and $b_0\ge 0$.
A detailed study of the moment structure of large index urns and the triangular urns with $a_m=0$ (and $b_0\ge 0$) will appear in a companion work.

\subsection{Asymptotic expansion of the variance}
An asymptotic expansion of the variance of $W_n$ can be obtained from the explicit expressions for the expected value and the second moment. It is required 
to prove later on a central limit theorem for $\Lambda\le\frac12$ and almost sure convergence of large-index urns. %For the sake of completeness we present expansions for the complete range of $\Lambda$ for nontriangular urn models.
\begin{theorem}
\label{TheVar}
For balanced affine urn schemes,
the variance satisfies the following expansions:
\item Small-index urns, the case $\Lambda<\frac12$: 
  \[
	\V[W_n]=
	\frac{a_mb_0\Lambda^2}{m(1-2\Lambda)(1-\Lambda)^2}\,  n +o(n).
	\]
	\item Critical-index urns, the case $\Lambda=\frac12$: 
	\[
	\V[W_n]=\frac{a_mb_0}{m} n\log n + \OO(n),
	\]
	\item Large-index urns, the case $\Lambda>\frac12$: 
	\[
	\V[W_n]=C n^{2\Lambda} + \OO(n),
	\]
	with the constant $C$ being model-dependent given by an infinite sum:
	\begin{equation*}
	\begin{split}
C&=\frac{W_0^2}{\psi_0} +\sum_{j=1}^{\infty}\frac{\beta_j\E[W_{j-1}]+a_m^2-\psi_j2a_m j^{-\Lambda}\Big(\frac{a_m}{1-\Lambda}j^{1-\Lambda}+( W_0-\frac{\frac{a_mT_0}\sigma}{1-\Lambda}) \frac{\Gamma(\frac{T_0}\sigma)}{\Gamma(\frac{T_0}\sigma+\Lambda)}\Big)}{\psi_j}\\
&\quad+\frac{2a_m^2}{1-\Lambda}\zeta(2\Lambda-1)+2a_m\Big( W_0-\frac{\frac{a_mT_0}\sigma}{1-\Lambda}\Big) \frac{\Gamma(\frac{T_0}\sigma)}{\Gamma(\frac{T_0}\sigma+\Lambda)} \zeta(\Lambda)\\
%removed rightmost right parenthesis
&\quad -\Big( W_0-\frac{\frac{a_mT_0}\sigma}{1-\Lambda}\Big)^2 \frac{\Gamma^2(\frac{T_0}\sigma)}{\Gamma^2(\frac{T_0}\sigma+\Lambda)},
	\end{split}
	\end{equation*}
	with $\zeta(z)$ denoting the Riemann zeta function and $\beta_j$, $\psi_j$, $\E[W_{j-1}]$ as given in~\eqref{MuliDrawsVarBetaModelM}, \eqref{MuliDrawsVarBetaModelR},~\eqref{VarSimple2Eqn},~and Proposition~\ref{Prop:caligraphicW}.
\end{theorem}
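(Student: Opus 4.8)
The plan is to compute the variance straight from the definition $\V[W_n]=\E[W_n^2]-(\E[W_n])^2$, feeding in the closed forms of Proposition~\ref{MuliDrawsLinProp1} for $\E[W_n]$ and Proposition~\ref{MuliDrawsLinProp2} for $\E[W_n^2]$. The only analytic input needed is Stirling's formula (already used for the mean): it turns the product $\psi_n=\prod_{\ell=1}^{n}\alpha_\ell$ of second-moment multipliers into a pure power. Indeed, since $\lambda_1+\lambda_2-\frac{T_0}\sigma-\frac{T_0-1}\sigma=2\Lambda$ for \mom\ and $\mu_1+\mu_2-\frac{2T_0}\sigma=2\Lambda$ for \mor, both sampling schemes give $\psi_n\sim\kappa\,n^{2\Lambda}$ with the \emph{same} exponent $2\Lambda$, only the prefactor $\kappa$ differing. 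Writing $A=\frac{a_m}{1-\Lambda}$ and $D=\big(W_0-\frac{a_mT_0/\sigma}{1-\Lambda}\big)\frac{\Gamma(T_0/\sigma)}{\Gamma(T_0/\sigma+\Lambda)}$ for the coefficients of $n$ and $n^{\Lambda}$ in the expansion of $\E[W_n]$, the leading $A^2n^2$ terms of $\E[W_n^2]$ and of $(\E[W_n])^2$ must cancel, so the whole difficulty is to extract the surviving lower-order term cleanly.

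Rather than fight this cancellation inside the explicit formul\ae{}, I would first reorganize it into a recurrence for the variance itself. Subtracting the square of the mean recurrence $\E[W_n]=\rho_n\E[W_{n-1}]+a_m$, with $\rho_n=\frac{T_{n-1}+m(a_{m-1}-a_m)}{T_{n-1}}$, from the second-moment recurrence $\E[W_n^2]=\alpha_n\E[W_{n-1}^2]+\beta_n\E[W_{n-1}]+a_m^2$ produces
\[
\V[W_n]=\alpha_n\V[W_{n-1}]+R_n,\qquad R_n=(\alpha_n-\rho_n^2)(\E[W_{n-1}])^2+(\beta_n-2a_m\rho_n)\E[W_{n-1}].
\]
The gain is that $\alpha_n-\rho_n^2=\OO(1/n^2)$ and $\beta_n-2a_m\rho_n=\OO(1/n)$ in both models, so each contribution to $R_n$ is $\OO(1)$: the $n^2$-cancellation is now carried out \emph{algebraically} once and for all. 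Using $T_{n-1}\sim\sigma n$, $\E[W_{n-1}]\sim An$ and the identity $b_0=\sigma(1-\Lambda)-a_m$, I expect $R_n\to r_\infty=\frac{\Lambda^2a_mb_0}{m(1-\Lambda)^2}$ for \mom\ and \mor\ alike; verifying this limit (and that the $\OO(1/n)$ fluctuations of $R_n$ are harmless) is the main algebraic check. Solving the linear recurrence then gives $\V[W_n]=\psi_n\sum_{j=1}^{n}R_j/\psi_j$.

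The trichotomy now falls out of the summability of $R_j/\psi_j\sim r_\infty\kappa^{-1}j^{-2\Lambda}$, with $\kappa$ cancelling against $\psi_n\sim\kappa\,n^{2\Lambda}$ in the end. For $\Lambda<\frac12$ one has $\sum_{j\le n}j^{-2\Lambda}\sim\frac{n^{1-2\Lambda}}{1-2\Lambda}$, hence $\V[W_n]\sim\frac{r_\infty}{1-2\Lambda}\,n$, exactly the stated small-index coefficient. For $\Lambda=\frac12$ the sum is $\sim\log n$ and $\psi_n\sim\kappa\,n$, so $\V[W_n]\sim r_\infty\,n\log n$ with $r_\infty=\frac{a_mb_0}{m}$ at $\Lambda=\frac12$, matching the critical case. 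For $\Lambda>\frac12$ the series $\sum_{j\ge1}R_j/\psi_j$ converges and $\V[W_n]\sim Cn^{2\Lambda}$ with $C=\big(\lim_n\psi_n/n^{2\Lambda}\big)\sum_{j\ge1}R_j/\psi_j$; the $\OO(n)$ remainder comes from the tail $\sum_{j>n}$ multiplied by $\psi_n\sim\kappa\,n^{2\Lambda}$.

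It remains to recast this large-index constant into the explicit form displayed. For that I would \emph{not} pre-cancel, but keep the raw second-moment series $\sum_{j\le n}(\beta_j\E[W_{j-1}]+a_m^2)/\psi_j$ of Proposition~\ref{MuliDrawsLinProp2} and subtract $(\E[W_n])^2$ only at the end. Its summand behaves like $2a_mA\,j^{1-2\Lambda}+2a_mD\,j^{-\Lambda}$, which for $\frac12<\Lambda<1$ is \emph{not} summable; subtracting this asymptotic part (the term $\psi_j2a_mj^{-\Lambda}(Aj^{1-\Lambda}+D)/\psi_j$ in the statement) leaves a convergent series, while the subtracted partial sums are evaluated through $\sum_{j\le n}j^{-s}=\zeta(s)+\frac{n^{1-s}}{1-s}+o(1)$. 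The growing pieces $A^2n^2$ and $2ADn^{1+\Lambda}$ so produced cancel against the matching terms of $(\E[W_n])^2$, the boundary constants yield $\frac{2a_m^2}{1-\Lambda}\zeta(2\Lambda-1)$ and $2a_mD\,\zeta(\Lambda)$, and the leftover $-D^2$ from $-(\E[W_n])^2$ is the final term of $C$. The hard part is precisely this bookkeeping: arranging the mutually divergent series so that the $n^2$, $n^{1+\Lambda}$ and $n^{2\Lambda}$ contributions line up, justifying the zeta-regularization uniformly, and controlling all errors down to $\OO(n)$ simultaneously for the hypergeometric variance input of model $\mathcal{M}$ and the binomial one of model $\mathcal{R}$.
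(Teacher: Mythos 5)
Your proposal is correct, and for the small- and critical-index cases it takes a genuinely different route from the paper. The paper works directly with the closed form $\E[W_n^2]=\frac{\psi_n}{\psi_0}W_0^2+\psi_n\sum_{j\le n}(\beta_j\E[W_{j-1}]+a_m^2)/\psi_j$, expands the summand, the prefactor $\psi_n$ (including its second-order constant $M$) and $(\E[W_n])^2$ to several orders, and verifies the cancellation of the $E_1^2n^2$ and $2E_1E_2n^{1+\Lambda}$ terms through coefficient identities such as $\frac{B_1E_1}{2(1-\Lambda)}=E_1^2$, the surviving linear coefficient being simplified by computer algebra. Your recurrence $\V[W_n]=\alpha_n\V[W_{n-1}]+R_n$ performs that cancellation algebraically once and for all: indeed $\alpha_n-\rho_n^2\sim -m(a_{m-1}-a_m)^2/T_{n-1}^2$ and $\beta_n-2a_m\rho_n\sim m(a_{m-1}-a_m)^2/T_{n-1}$ in \emph{both} sampling models (in \momp the exact expressions differ through $\fallfak{T_{n-1}}{2}$ but have the same leading term), so $R_n\to\frac{\Lambda^2a_mb_0}{m(1-\Lambda)^2}$ as you claim, and the trichotomy reduces to the summability of $R_j/\psi_j\sim r_\infty\kappa^{-1}j^{-2\Lambda}$. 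This buys a shorter and less error-prone derivation of the leading constants, needs only the first term of $\E[W_{n-1}]$ rather than a three-term expansion, handles $\Lambda<0$ uniformly (the paper treats it in a separate remark), and yields as a by-product the compact alternative form $C=\bigl(\lim_n\psi_n n^{-2\Lambda}\bigr)\sum_{j\ge 1}R_j/\psi_j$ for the large-index constant. To reproduce the paper's displayed $C$ you revert to exactly the paper's device---subtracting the non-summable part $2a_mj^{-\Lambda}(E_1j^{1-\Lambda}+E_2)$ of the raw summand and evaluating the subtracted sums via $\sum_{j\le n}j^{\alpha}=\frac{n^{\alpha+1}}{\alpha+1}+\frac{n^{\alpha}}{2}+\zeta(-\alpha)+O(n^{\alpha-1})$---so that part of your argument coincides with the paper's. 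The only point left implicit in your sketch is the error bookkeeping that sharpens $o(n)$ to $\OO(n)$ in the critical and large-index cases, but the tail estimates you indicate (second-order term of $R_j$ of size $O(j^{\Lambda-1})$, tail of the convergent series of size $O(n^{1-2\Lambda})$) deliver it.
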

\begin{proof}
%[Proof of Theorem~\ref{TheVar}]
Our starting point is the expression for $\E[W_n^2]$ in Proposition~\ref{MuliDrawsLinProp2}. In order to perform a unified analysis for the two models, we use the representation
\begin{equation}
\label{VarSimple1Eqn}
\E[W_n^2]=\frac{\psi_n}{\psi_0}  W_0^2 + \psi_n\sum_{j=1}^{n}\frac{\beta_j\E[W_{j-1}]+a_m^2}{\psi_j},
\end{equation}
with
\begin{equation}
\label{VarSimple2Eqn}
%ALT
%\kappa=
%\begin{cases}
%\frac{\Gamma(\frac{T_0}{\sigma})\Gamma(\frac{T_0-1}{\sigma})}{\Gamma(\lambda_1)\Gamma(\lambda_2)},\quad \mom,\\
%\frac{\Gamma(\frac{T_0}{\sigma})^2}{\Gamma(\mu_1)\Gamma(\mu_2)},\quad \mor,
%\end{cases}
%\qquad
\psi_n=
\begin{cases}
\displaystyle{\frac{\Gamma(n+\lambda_1)\Gamma(n+\lambda_2)}{\Gamma(n+\frac{T_0}{\sigma})\, \Gamma(n+\frac{T_0-1}{\sigma})}}, & \mom;\\[0.35cm]
\displaystyle{\frac{\Gamma(n+\mu_1)\Gamma(n+\mu_2)}{\Gamma(n+\frac{T_0}{\sigma})^2}}, & \mor.
\end{cases}
\end{equation}
We refine our previous result and observe that the expected value $\E[W_n]$ satisfies the asymptotic expansion
\begin{equation}
\begin{split}
\label{ExpansionEn}
\E[W_{n}] &= \frac{a_m }{1-\Lambda}\, n  + \Big( W_0-\frac{\frac{a_mT_0}\sigma}{1-\Lambda}\Big) \frac{\Gamma(\frac{T_0}\sigma)}{\Gamma(\frac{T_0}\sigma+\Lambda)} \, n^{\Lambda} + \frac{T_0 a_m }{\sigma(1-\Lambda)} + O(n^{-1+\Lambda}).
\end{split}
\end{equation}
Moreover, $g_n$ satisfies the asymptotic expansion
\begin{equation}
\label{ExpansionGn}
g_n=\frac{\Gamma(\frac{T_0}\sigma+\Lambda)}{\Gamma(\frac{T_0}\sigma)}n^{-\Lambda}\Big(1+\frac1{2n}\Lambda\Big(1-\frac{2T_0}\sigma-\Lambda\Bigr)+O\Bigl(\frac1{n^2}\Bigr)\Bigr).
\end{equation}
Furthermore, $\beta_n$ satisfies for both urn models the asymptotic expansion
\begin{equation}
\label{ExpansionBn}
\beta_n=2a_m+\frac{\Lambda(a_{m-1}+a_m)}{n}  +O\Bigl(\frac1{n^2}\Bigr).
\end{equation}
We need the expansion
\begin{equation*}
%\label{ExpansionPsi}
\psi_n=n^{2\Lambda}\Big(1+\frac{M}{n}+O\Bigl(\frac1{n^2}\Bigr)\Big),
\end{equation*}
with the constant $M$ given by
\begin{equation*}
M=
\begin{cases}
\frac12\big(\lambda_1^2-\lambda_1+\lambda_2^2-\lambda_2
-\frac{T_0^2}{\sigma^2}+\frac{T_0}{\sigma}-\frac{(T_0-1)^2}{\sigma^2}+\frac{T_0-1}{\sigma}\big),& \mom;\\
\frac12\big(\mu_1^2-\mu_1+\mu_2^2-\mu_2
-2\frac{T_0^2}{\sigma^2}+2\frac{T_0}{\sigma}\big),& \mor,
\end{cases}
\end{equation*}
with $\lambda_i$, $\mu_i$ as given in Proposition~\ref{MuliDrawsLinProp2}.
After simplifications it turns out that for both models the constant $M$ 
is given by 
\[
%M=(1-\Lambda)^2 -(1-\Lambda)+\frac{\Lambda^2}{m}+\frac{2\Lambda T_0}\sigma.
M=\Lambda^2-\Lambda+\frac{\Lambda^2}{m}+\frac{2\Lambda T_0}\sigma.
\]
In order to keep track of the different expansions in a readable transparent way, we introduce a shorthand notation:
\begin{equation}
\begin{split}
\label{ExpansionSummary}
\E[W_{n}] &= E_1 n + E_2 n^{\Lambda}+E_3+O(n^{-1+\Lambda}),\\
\beta_n&=B_1+B_2n^{-1} + O(n^{-2}),\\
\end{split}
\end{equation}
with constants $E_i$, $B_i$ as given in~\eqref{ExpansionEn} and~\eqref{ExpansionBn}.
We note that
\begin{equation*}
\bigl(\E[W_n]\bigr)^2=E_1^2 n^2 + 2E_1E_2 n^{1+\Lambda}+2E_1E_3n+E_2^2n^{2\Lambda}+o(n).
\end{equation*}
%Since $\V[W_n]=\E[W_n^2]- (E[W_n])^2$, %and 
%$$\V[W_n]=
%	C_1  n +o(n), 		\text{for %}\Lambda<\frac12;$$
We shall prove that
\[
\E[W_n^2]=E_1^2n^2 + 2E_1E_2 n^{1+\Lambda}+\varphi_n,
\]
with 
%$$\varphi_n=
%\Bigl(  \frac{a_m(\sigma(1-\Lambda)-a_m)\Lambda^2}{m(1-2\Lambda)(1-\Lambda)^2} + 2E_1E_3\Bigr) n +o(n).
%$$
$$\varphi_n=
\begin{cases}
\Bigl(  \frac{a_m(\sigma(1-\Lambda)-a_m)\Lambda^2}{m(1-2\Lambda)(1-\Lambda)^2} + 2E_1E_3\Bigr) n +o(n), &		\text{for }\Lambda<\frac12;\\ 
\frac{a_m(\frac{\sigma}{2}-a_m)}{m}n\log n +\OO(n),&\text{for }\Lambda=\frac12;\\ 
(C+E_2^2) n^{2\Lambda}+ \OO(n),&\text{for }\Lambda>\frac12.
\end{cases}
$$
We start with the small-index urns satisfying $\Lambda<\frac12$. Assume first that $0<\Lambda<\frac12$. We postpone the remaining case $\Lambda<0$ 
to the end (note that $\Lambda=0$ leads to a degenerate urn model). The expansion of $\E[W_n^2]$ is obtained as follows. First, let $j=j(n)$, with $j\to \infty$, and write
\begin{align}
\label{ExpansionJ}
\frac{\beta_j\E[W_{j-1}]+a_m^2}{\psi_j} 
   &= B_1E_1j^{1-2\Lambda} + B_1E_2j^{-\Lambda} \nonumber\\
   & \qquad {} +(a_m^2+B_1E_3+E_1B_2-B_1E_1-B_1E_1M)j^{-2\Lambda}+O(j^{-1-\Lambda}).
\end{align}
Replacing the summands by their asymptotic expansion leads to an error of magnitude $O(1)$.
This is fully sufficient for our purpose. Consequently, we use the following identity, which can be obtained using the Euler-MacLaurin summation formula (see ~\cite{GraKnuPa}; 
Pages 595--596):  
\begin{equation}
\sum_{j=1}^{n}j^{\alpha}=\frac{n^{\alpha+1}}{\alpha+1}+\frac{n^{\alpha}}2 + \zeta(-\alpha)+ O(n^{\alpha-1}),
\label{KNUTH1}
\end{equation}
for $\alpha\neq -1$, where $\zeta(z)$ denotes the Riemann zeta function. 
We obtain the expansion
\begin{equation}
\begin{split}
\label{ExpansionSmall1}
\sum_{j=1}^{n}\frac{\beta_j\E[W_{j-1}]+a_m^2}{\psi_j}&=
B_1E_1\Big(\frac{n^{2-2\Lambda}}{2-2\Lambda}+\frac{n^{1-2\Lambda}}{2}\Big)
+B_1E_2\frac{n^{1-\Lambda}}{1-\Lambda}\\
&\quad+\big(a_m^2+B_1E_3+E_1B_2-B_1E_1-B_1E_1M\big)\frac{n^{1-2\Lambda}}{1-2\Lambda}+ O(1).
\end{split}
\end{equation}
Since $\frac{B_1E_1}{2(1-\Lambda)}=E_1^2$ and $\frac{B_1E_2}{1-\Lambda}=2E_1E_2$, 
we obtain---taking into account the expansion of $\psi_n$---the following:
\begin{equation}
\begin{split}
\label{ExpansionSmall2}
\psi_n\sum_{j=1}^{n}\frac{\beta_j\E[W_{j-1}]+a_m^2}{\psi_j}&=
E_1^2n^2 + 2E_1E_2n^{1+\Lambda}\\
&\qquad + \Bigl(\frac{B_1E_1}{2}+M E_1^2
+\frac{a_m^2+B_1E_3+E_1B_2-B_1E_1-B_1E_1M}{1-2\Lambda}\Bigr)n\\
& \qquad {} +o(n).
\end{split}
\end{equation}
Consequently, the first two terms in $\E[W_n^2]-(\E[W_n])^2$ cancel out. Only a leading linear term remains in the variance, and its coefficient is 
\[\Big(\frac{B_1E_1}{2}+M E_1^2
+\frac{a_m^2+B_1E_3+E_1B_2-B_1E_1-B_1E_1M}{1-2\Lambda}\Big)-2E_1E_3.
\]
The stated result follows after simplification aided by a computer algebra system and using the fact that $b_0=\sigma(1-\Lambda)-a_m$. 

%broke the paragraph
For $\Lambda<0$ we can proceed in a similar way. The expansion~\eqref{ExpansionJ} is still valid. The only difference is that the magnitude of the error is larger 
and of order $O(n^{-\Lambda})$ in\eqref{ExpansionSmall1}. Nevertheless, the resulting expansion~\eqref{ExpansionSmall2} is still valid due to the multiplication with 
$\psi_n\sim n^{2\Lambda}$.

For $\Lambda=\frac12$, we proceed in a similar way. We use the identity
\begin{equation}
\sum_{j=1}^{n}j^{-1}=\ln n+\gamma+\OO\Bigl(\frac1 n\Bigr),
\label{KNUTH2}
\end{equation}
where $\gamma$ denotes the Euler-Mascheroni constant. We have $B_1E_1=2a_m \frac{a_m}{1-\Lambda}=\frac{a_m^2}{(1-\Lambda)^2}=E_1^2$, 
and also $2B_1E_2=2E_1E_2$, such that
\begin{align*}
%\label{ExpansionSmall3}
\psi_n\sum_{j=1}^{n}\frac{\beta_j\E[W_{j-1}]+a_m^2}{\psi_j}&=
E_1^2n^2 + 2E_1E_2n^{1+\Lambda} \nonumber \\
&\quad + \big(a_m^2+B_1E_3+E_1B_2-B_1E_1-B_1E_1M\big)n\ln n+\OO(n).
\end{align*}
Consequently, the first two terms in $\E[W_n^2]-\E[W_n]^2$ cancel out again,
and the important constant is given by
\[
a_m^2+B_1E_3+E_1B_2-B_1E_1-B_1E_1M.
\]
The stated result is obtained after simplification.

For large-index urns $\Lambda>\frac12$ we cannot neglect errors of magnitude $\OO(1)$ as in the case $0<\Lambda<\frac12$. 
In order to deal with the cancellations, we adapt~\eqref{VarSimple1Eqn} and use a different exact representation
\begin{align*}
%\label{VarSimple3Eqn}
\E[W_n^2] &= \frac{\psi_n}{\psi_0}  W_0^2 + \psi_n\sum_{j=1}^{n}\frac{\beta_j\E[W_{j-1}]+a_m^2-\psi_jB_1j^{-\Lambda}(E_1j^{1-\Lambda}+E_2)}{\psi_j}\nonumber\\
   & \qquad {} +\psi_n B_1\sum_{j=1}^{n}(E_1j^{1-2\Lambda}+E_2j^{-\Lambda}).
\end{align*}
Owing to~\eqref{ExpansionJ} we know that the first sum is convergent by the comparison test. 
Application of~\eqref{KNUTH1} to the second sum gives
\[
\psi_n B_1\sum_{j=1}^{n}(E_1j^{1-2\Lambda}+E_2j^{-\Lambda})=
E_1^2n^2+ 2E_1E_2n^{1+\Lambda}+B_1\big(E_1\zeta(2\Lambda-1)+E_2\zeta(\Lambda)\big)n^{2\Lambda}+o(n^{2\Lambda}).
\]
The first two terms in $\E[W_n^2]- (\E[W_n])^2$ cancel out, 
and the constant $C$ is given by
\[
\frac{W_0^2}{\psi_0} +\sum_{j=1}^{\infty}\frac{\beta_j\E[W_{j-1}]+a_m^2-\psi_jB_1j^{-\Lambda}(E_1j^{1-\Lambda}+E_2)}{\psi_j}
+B_1\big(E_1\zeta(2\Lambda-1)+E_2\zeta(\Lambda)\big)-E_2^2,
\]
which proves the stated result.
\end{proof}

\smallskip

\subsection{Almost-sure convergence of nontriangular urns}
For triangular urns with $a_m=0$ (or $b_0=0$ or both) we have already obtained a limit theorem for $W_n$ via the martingale in Proposition~\ref{Prop:caligraphicW}. A first byproduct of the previous result concerning the first and second moment is a limit theorem for $W_n/T_n$ for $a_m\neq 0$ (and $b_0\neq 0$). 

\begin{prop}
Let $W_n$ be the number of white balls in the urn after $n$ draws. 
For nontriangular balanced affine urn models with $\Lambda<1$ the ratio of white balls $W_n$ over the total number $T_n=T_0+n\sigma$ after $n$ draws converges almost surely:
$$\frac{W_n}{T_n} \as   \frac{a_m}{\sigma(1-\Lambda)}=\frac{a_m}{a_m+b_0}.$$
\end{prop}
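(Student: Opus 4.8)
The plan is to exploit the martingale $\calW_n = g_n(W_n-\E[W_n])$ from Proposition~\ref{Prop:caligraphicW}, which is available precisely because the nontriangular hypothesis forces $a_m\neq 0$. Since $T_n = T_0+n\sigma \sim \sigma n$, the assertion $W_n/T_n \as \frac{a_m}{\sigma(1-\Lambda)}$ is equivalent to $W_n/n \as \frac{a_m}{1-\Lambda}$; and in view of the expansion $\E[W_n] = \frac{a_m}{1-\Lambda}n + \OO(n^{\Lambda})$ from~\eqref{ExpansionEn}, it suffices to prove $(W_n-\E[W_n])/n \as 0$. Writing $W_n - \E[W_n] = \calW_n/g_n$ and using $g_n \sim \frac{\Gamma(T_0/\sigma+\Lambda)}{\Gamma(T_0/\sigma)}n^{-\Lambda}$ from~\eqref{ExpansionGn}, so that $g_n n$ grows like a constant times $n^{1-\Lambda}$, I would first reduce the whole statement to showing $\calW_n/n^{1-\Lambda} \as 0$. (The identity $\sigma(1-\Lambda) = a_m+b_0$ needed to match the two displayed forms of the limit follows from $\Lambda = \tfrac m\sigma(a_{m-1}-a_m)$ together with $a_0 = m(a_{m-1}-a_m)+a_m$ and $b_0 = \sigma - a_0$.)

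Next I would measure the $L_2$ size of the martingale. Since its increments are orthogonal, $\E[\calW_n^2] = g_n^2\,\V[W_n]$, and combining the three variance expansions of Theorem~\ref{TheVar} with $g_n^2\sim \mathrm{const}\cdot n^{-2\Lambda}$ gives
\[
\E[\calW_n^2] =
\begin{cases}
\OO(n^{1-2\Lambda}), & \Lambda<\tfrac12,\\
\OO(\log n), & \Lambda=\tfrac12,\\
\OO(1), & \Lambda>\tfrac12.
\end{cases}
\]
The large-index regime is then immediate: $\calW_n$ is bounded in $L_2$, hence by Doob's martingale convergence theorem it converges almost surely to a finite limit, and since $n^{1-\Lambda}\to\infty$ (here $\Lambda<1$) we obtain $\calW_n/n^{1-\Lambda}\as 0$.

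For the small- and critical-index cases the martingale itself is not $L_2$-bounded, so I would instead invoke the $L_2$ martingale strong law. Setting $b_n = n^{1-\Lambda}$ and $X_k = \calW_k - \calW_{k-1}$, the normalized process $N_n = \sum_{k=1}^n X_k/b_k$ is again a martingale with $\E[N_n^2] = \sum_{k=1}^n \E[X_k^2]/b_k^2$, and $\E[X_k^2] = \E[\calW_k^2]-\E[\calW_{k-1}^2]\ge 0$ by orthogonality. Rather than estimating these increments directly, I would bound the partial sums by Abel summation (using $\calW_0=0$),
\[
\sum_{k=1}^N \frac{\E[X_k^2]}{b_k^2} = \frac{\E[\calW_N^2]}{b_N^2} + \sum_{k=1}^{N-1}\E[\calW_k^2]\Bigl(b_k^{-2}-b_{k+1}^{-2}\Bigr),
\]
so that only the value bounds on $\E[\calW_k^2]$ are needed: the boundary term tends to $0$, while $b_k^{-2}-b_{k+1}^{-2} = \OO(k^{-(3-2\Lambda)})$ makes the summand $\OO(k^{-2})$ for $\Lambda<\tfrac12$ and $\OO(k^{-2}\log k)$ for $\Lambda=\tfrac12$, in both cases summable. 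Hence $N_n$ is bounded in $L_2$, converges almost surely, and Kronecker's lemma gives $\calW_n/b_n = \frac1{b_n}\sum_{k=1}^n b_k(X_k/b_k)\as 0$.

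Combining the three cases yields $(W_n-\E[W_n])/n\as 0$, and with $\E[W_n]/n\to \frac{a_m}{1-\Lambda}$ I conclude $W_n/n\as\frac{a_m}{1-\Lambda}$, whence $W_n/T_n \as \frac{a_m}{\sigma(1-\Lambda)} = \frac{a_m}{a_m+b_0}$. I expect the main obstacle to be the passage from the in-$L_2$ (equivalently in-probability) statement, which the variance bounds deliver cheaply, to almost-sure convergence in the small- and critical-index regimes: there $\calW_n$ diverges in $L_2$, so one cannot appeal to martingale convergence directly but must argue through the auxiliary martingale $N_n$ and Kronecker's lemma. Verifying the summability condition via the Abel-summation identity above, so as to avoid unjustified estimates of the individual increments $\E[X_k^2]$, is the delicate point, with the borderline critical case $\Lambda=\tfrac12$ being the tightest.
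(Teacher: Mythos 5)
Your proof is correct, but it follows a genuinely different route from the paper's. The paper works directly with $Z_n=\frac{W_n}{T_n}-\frac{a_m}{\sigma(1-\Lambda)}$: it computes $\E[Z_n^2\given\field_{n-1}]$ from scratch, shows that $Z_n^2+\kappa_2/T_n$ is a positive supermartingale (hence $Z_n^2$ converges a.s.\ to some limit $Z$), and then identifies $Z=0$ by proving $\E[Z_n^2]\to 0$ via a deterministic recursion lemma of Chen--Hsiau--Yang together with dominated convergence; this treats all $\Lambda<1$ uniformly and does not rely on the variance asymptotics of Theorem~\ref{TheVar}. You instead reduce everything to $\calW_n/n^{1-\Lambda}\as 0$ for the martingale of Proposition~\ref{Prop:caligraphicW} and feed the three variance regimes of Theorem~\ref{TheVar} into standard martingale machinery: $L_2$-boundedness and Doob convergence for $\Lambda>\frac12$ (which in effect anticipates the paper's subsequent almost-sure convergence theorem for large-index urns), and the $L_2$ strong law via the normalized martingale $N_n$, Abel summation on $\E[\calW_k^2]-\E[\calW_{k-1}^2]$, and Kronecker's lemma for $\Lambda\le\frac12$. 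Your Abel-summation step is sound (the boundary term is $\OO(1/N)$ resp.\ $\OO((\log N)/N)$ and the tail summands are $\OO(k^{-2})$ resp.\ $\OO(k^{-2}\log k)$, also for $\Lambda<0$), so the summability condition does hold in the borderline case. What you gain is a model-independent argument that reuses already-established asymptotics and avoids the ad hoc supermartingale construction; what the paper's approach buys is independence from Theorem~\ref{TheVar} and a single case-free argument, at the price of lengthier explicit conditional second-moment computations and an external lemma.
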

\begin{proof}
Following~\cite{ChenKu2013+} we use supermartingale theory to obtain the stated result. We only present the computation for \mor, the proof for \momp is very similar. The following computations are somewhat lengthy, and 
preferably carried out with the help of a computer algebra system.
Let $Z_n=\frac{W_n}{T_n}-  \frac{a_m}{\sigma(1-\Lambda)}$. 
Using Proposition~\ref{MuliDrawsLinPropLinear} we obtain 
\[
\E\bigl[ Z_n \given \field_{n-1}\bigr]= \Big(1+\frac{\sigma\Lambda}{T_{n-1}}\Big)\Big(1-\frac{\sigma}{T_{n}}\Big)Z_{n-1}
=\frac{T_{n-1}+\sigma\Lambda}{T_n}Z_{n-1}.
\]
Furthermore, in a manner similar to the proof of Proposition~\ref{MuliDrawsLinProp2}, we get 
\begin{equation*}
\begin{split}
\E\bigl[ Z_n^2 \given \field_{n-1}\bigr]&=\frac{T_{n-1}^2}{T_n^2}\Big(1+\frac{2\sigma\Lambda}{T_{n-1}}+\frac{\Lambda^2(m-1)\sigma^2}{mT_{n-1}^2}\Big)
Z_{n-1}^2\\
&\quad+\frac{\Lambda^2\sigma(\sigma(1-\Lambda)-2a_m)}{(1-\Lambda)mT_n^2}Z_{n-1}+\frac{a_m\Lambda^2(\sigma(1-\Lambda)-a_m)}{(1-\Lambda)^2mT_n^2}.
\end{split}
\end{equation*}
Hence,
\begin{equation*}
\begin{split}
\E\bigl[ Z_n^2 \given \field_{n-1}\bigr]&\le 
\frac{(T_{n-1}+\sigma\Lambda)^2}{T_n^2}Z_{n-1}^2+\frac{\Lambda^2\sigma(\sigma(1-\Lambda)-2a_m)}{(1-\Lambda)mT_n^2}Z_{n-1}+\frac{a_m\Lambda^2(\sigma(1-\Lambda)-a_m)}{(1-\Lambda)^2mT_n^2}.
\end{split}
\end{equation*}
Now we use the fact that $\sigma(1-\Lambda)-a_m=b_0\ge 0$ and also $a_m\ge 0$. 
Moreover, we know that $0\le \frac{W_n}{T_n}\le 1$, and consequently $-\frac{a_m}{\sigma(1-\Lambda)}\le Z_n\le 1-\frac{a_m}{\sigma(1-\Lambda)}$. Thus, there exists a constant $\kappa_1=\kappa_1(m,a_{m-1},a_m,\sigma)$---independent of $n$---such that
\begin{equation*}
\begin{split}
\E\bigl[ Z_n^2 \given \field_{n-1}\bigr]&\le \frac{(T_{n-1}+\sigma\Lambda)^2}{T_n^2}Z_{n-1}^2+\frac{\Lambda^2\sigma|(b_0-a_m)Z_{n-1}|+a_mb_0\Lambda^2}{(1-\Lambda)mT_n^2}\\
&\le \frac{(T_{n-1}+\sigma\Lambda)^2}{T_n^2}Z_{n-1}^2+\frac{\kappa_1}{T_n^2}.
\end{split}
\end{equation*}
There exists a constant $\kappa_2>0$ such that $\frac{\kappa_2}{T_n}+\frac{\kappa_1}{T_n^2}\le \frac{\kappa_2}{T_{n-1}}$.
Let $c_n=\frac{(T_{n-1}+\sigma\Lambda)^2}{T_n^2}$ with $0<c_n <1$ and $d_n=\frac{\kappa_1}{T_n^2}>0$.
We have 
\begin{equation*}
\E\bigl[ Z_n^2 +\frac{\kappa_2}{T_n}\given \field_{n-1}\bigr]\le c_n Z_{n-1}^2+d_n +\frac{\kappa_2}{T_n} \le Z_{n-1}^2+\frac{\kappa_1}{T_n^2}+ \frac{\kappa_2}{T_n} \le Z_{n-1}^2+\frac{\kappa_2}{T_{n-1}}.
\end{equation*}
Hence, $Z_n^2 +\frac{\kappa_2}{T_n}$ is a positive supermartingale, which converges almost surely. Thus, $Z_n^2$ converges almost surely.
Let $\lim_{n\to\infty} Z^2_n= Z$ almost surely. Following~\cite{ChenKu2013+}, we next prove that 
$\E[Z^2_n]\to 0$. By dominated convergence this is sufficient to obtain the stated result since it implies that $\E[Z] = 0$ and so $Z = 0$ almost surely, such that $Z_n$ converges to $0$ almost surely. 
We have
 \begin{equation*}
\E[ Z_n^2]\le c_n\E[Z_{n-1}^2]+d_n.
\end{equation*}
By the comparison theorem $$\sum_{n=1}^{\infty}d_n=\sum_{n=1}^{\infty}\frac{\kappa_1}{(n\sigma+T_0)^2}<\infty.$$  Moreover, 
$$\prod_{k=1}^{n}\frac{(T_{k-1}+\sigma\Lambda)^2}{T_k^2}=
\prod_{k=1}^{n}\frac{(k+\Lambda-1+\frac{T_0}\sigma)^2}{(k+\frac{T_0}\sigma)^2}=
\frac{\Gamma(n+\Lambda+\frac{T_0}\sigma)^2\Gamma(1+\frac{T_0}\sigma)^2}{\Gamma(\Lambda+\frac{T_0}\sigma)^2\Gamma(n+1+\frac{T_0}\sigma)^2}.$$
Thus, we can use the following lemma ---also used in~\cite{ChenKu2013+}--- to show that $\E[Z^2_n]\to 0$ and to finish our proof.
\begin{lemma}[\cite{ChenHsiauYang}]
Suppose $\{x_n\}_{n\geq 1}$, $\{c_n\}_{n\geq 1}$ and $\{d_n\}_{n\geq 1}$ are nonnegative real sequences satisfying $x_{n+1}\leq c_n x_n+d_n$, where $0<c_n<1$ for $n\geq 1$. If $\prod_{i=1}^n c_i\to 0$ and $\sum_{n=1}^\infty d_n<\infty$, then $x_n\to0$.
\end{lemma}
By Stirling's formula the product $\prod_{k=1}^{n}\frac{(T_{k-1}+\sigma\Lambda)^2}{T_k^2}$ 
satisfies the asymptotic expansions $$\prod_{k=1}^{n}\frac{(T_{k-1}+\sigma\Lambda)^2}{T_k^2}\sim \kappa_3 n^{-1+\Lambda},$$
for some constant $\kappa_3$. Since $\Lambda<1$ the product tends to zero, and so does
%thus also 
$\E[Z^2_n]$.
\end{proof}

\subsection{Almost-sure convergence for urns with large index}
\begin{theorem}
For nontriangular balanced affine urn models with a large index $\frac 1 2 < \Lambda < 1$ the random variable $\calW_n=g_n(W_n-\E[W_n])$ 
converges almost surely and in ${\rm L}_2$ to a limit $\calW_\infty$.
\end{theorem}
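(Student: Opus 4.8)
The plan is to exploit the martingale $\calW_n = g_n(W_n - \E[W_n])$ established in Proposition~\ref{Prop:caligraphicW}, for which we already know $\E[\calW_n] = 0$ and the martingale property $\E[\calW_n \given \field_{n-1}] = \calW_{n-1}$. The standard route to simultaneous almost-sure and $L_2$ convergence of a martingale is to verify that it is bounded in $L_2$, i.e.\ that $\sup_n \E[\calW_n^2] < \infty$. By the martingale convergence theorem, $L_2$-boundedness yields both a.s.\ and $L_2$ convergence to a limit $\calW_\infty$. So the entire proof reduces to a variance estimate.

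First I would write $\E[\calW_n^2] = g_n^2\, \V[W_n]$, which is immediate from the definition of $\calW_n$ as a centered variable scaled by the deterministic factor $g_n$. Then I would feed in the two asymptotic inputs already available in the excerpt: from~\eqref{ExpansionGn} we have $g_n \sim \frac{\Gamma(T_0/\sigma + \Lambda)}{\Gamma(T_0/\sigma)}\, n^{-\Lambda}$, so that $g_n^2 = \Theta(n^{-2\Lambda})$; and from the large-index case of Theorem~\ref{TheVar} we have $\V[W_n] = C n^{2\Lambda} + \OO(n)$. Multiplying these gives
\begin{equation*}
\E[\calW_n^2] = g_n^2\, \V[W_n] = \Theta(n^{-2\Lambda})\bigl(C n^{2\Lambda} + \OO(n)\bigr) = C' + \OO(n^{1-2\Lambda}),
\end{equation*}
for an appropriate constant $C'$. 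Here the hypothesis $\Lambda > \frac{1}{2}$ is exactly what makes the error term $\OO(n^{1-2\Lambda})$ vanish as $\ntoo$, since $1 - 2\Lambda < 0$. Consequently $\E[\calW_n^2]$ converges to a finite constant, and in particular $\sup_n \E[\calW_n^2] < \infty$.

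With $L_2$-boundedness in hand, I would invoke the martingale convergence theorem to conclude that $\calW_n$ converges almost surely and in $L_2$ to a limit $\calW_\infty$, completing the proof. The main obstacle is not conceptual but bookkeeping: one must be careful that the constant multiplying $n^{2\Lambda}$ in the variance expansion is the genuine leading term (so that the product $g_n^2 \V[W_n]$ is bounded rather than growing), and that the restriction $\Lambda < 1$ guarantees $g_n \to 0$ and keeps the scheme nontriangular and nondegenerate. Both of these are supplied by the earlier results, so once the two asymptotics are combined the argument is essentially immediate; the substance of the theorem lives entirely in the variance expansion of Theorem~\ref{TheVar}, which we are permitted to assume.
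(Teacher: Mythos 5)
Your proposal is correct and follows essentially the same route as the paper: both arguments reduce to showing that $\E[\calW_n^2]=g_n^2\,\V[W_n]$ stays bounded, using $g_n=\Theta(n^{-\Lambda})$ from~\eqref{ExpansionGn} and $\V[W_n]=Cn^{2\Lambda}+\OO(n)$ from Theorem~\ref{TheVar}, and then invoke the $L_2$ martingale convergence theorem. The paper merely phrases the same criterion as $\sum_n\E\big[(\backward\calW_n)^2\big]<\infty$, which telescopes to exactly your bound $\sup_n\E[\calW_n^2]<\infty$.
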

\begin{proof}
By Proposition~\ref{Prop:caligraphicW}, $\calW_n$ is a martingale. 
Hence, by martingale theory it suffices to prove that
\[
\sum_{n=1}^{\infty}\E\big[(\backward\calW_n)^2\big]<\infty
\]
in order to prove almost-sure and $\mathcal{L}_2$ convergence (see 
Chapter 10 of ~\cite{Williams}). 
We use a standard argument:
\[
\E\big[(\backward\calW_n)^2\mid  \field_{n-1}\big] =
\E\big[(\calW_n-\calW_{n-1})^2\mid  \field_{n-1}\big] =
\E\big[\calW_n^2-2\calW_n\calW_{n-1}+\calW_{n-1}^2\mid  \field_{n-1}\big].
\]
By the martingale property we get further
\[
\E\big[(\backward\calW_n)^2\mid  \field_{n-1}\big] =\E\big[\calW_n^2\mid  \field_{n-1}\big]-2\calW_{n-1}\E\big[\calW_n\mid \field_{n-1}\big]+\calW_{n-1}^2
=\E\big[\calW_n^2\mid  \field_{n-1}\big]-\calW_{n-1}^2.
\]
This implies that
\[
\E\big[(\backward\calW_n)^2\big]=\E\big[\calW_n^2\big]-\E\big[\calW_{n-1}^2\big],\quad n\ge 1.
\]
Using the fact that $\calW_0=0$ we obtain
\[
\sum_{n=1}^{N}\E\big[(\backward\calW_n)^2\big]=\E\big[\calW_N^2\big]=g_N^2 \V[W_N].
\]
By the asymptotic expansion~\eqref{ExpansionGn} and of $\V[W_n]$ 
we observe that 
\[
\lim_{N\to\infty} \sum_{n=1}^{N}\E\big[(\backward\calW_n)^2\big] = C\frac{\Gamma^2(\frac{T_0}\sigma+\Lambda)}{\Gamma^2(\frac{T_0}\sigma)}<\infty,
\]
with $C$ as given in Theorem~\ref{TheVar}.
\end{proof}

Some corollaries of the relatively small variance for $\Lambda\le \frac12$ are helpful in deriving
further distributional results.
\begin{coroll}
\label{Lm:Lconcentration}
Let %canged a number of small things in the spacing and punctuation
$W_n$ be the number of white balls in the urn after $n$ draws, 
Then, for $\Lambda\le\frac12$, we have\footnote{By saying
a sequence of random variables $Y_n$
is $\OL  (g(n))$, we mean there exist a positive
constant $C$ and a positive integer $n_0$, such that $\E[|Y_n|] \le C |g(n)|$, for all $n \ge n_0$.}
$$W_n =  \frac{a_m}{1-\Lambda}\, n  +  \OL \bigr (\sqrt{	\V[W_n]}\, \bigl)
= \frac{a_m}{1-\Lambda}\, n  +
\begin{cases}
\OL (\sqrt{n}\, ),&\Lambda<\frac12;\\
\OL (\sqrt{n\ln n}\, ),& \Lambda=\frac12,\\
\end{cases}$$
and
$$W_n^2 = \Bigl(\frac{a_m}{1-\Lambda}\Bigr)^2\, n^2  
          +  \OL \bigl(	n^{\frac12}\V[W_n]\bigr) 
         =  \Bigl(\frac{a_m}{1-\Lambda}\Bigr)^2\, n^2   +
\begin{cases}
\OL (n^{\frac32}),\quad \Lambda<\frac12;\\
\OL (n^{\frac32}\ln n),\quad \Lambda=\frac12.\\
\end{cases}.$$
\end{coroll}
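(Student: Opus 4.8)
The plan is to read off both $L_1$-estimates from the variance asymptotics of Theorem~\ref{TheVar} and the first-moment expansion~\eqref{ExpansionEn}, using the elementary passage from a second moment to an $L_1$-norm afforded by Jensen's inequality. Write $\mu:=\frac{a_m}{1-\Lambda}$ for the leading coefficient of $\E[W_n]$; by~\eqref{ExpansionEn} the deterministic discrepancy satisfies $\bigl|\E[W_n]-\mu n\bigr|=O(n^{\Lambda})$, and since $\Lambda\le\frac12$ this is at most of order $n^{1/2}$.

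For the first assertion I would split
\[
\E\bigl|W_n-\mu n\bigr|\le \E\bigl|W_n-\E[W_n]\bigr|+\bigl|\E[W_n]-\mu n\bigr|,
\]
and bound the stochastic part by Jensen,
\[
\E\bigl|W_n-\E[W_n]\bigr|\le \sqrt{\E\bigl[(W_n-\E[W_n])^2\bigr]}=\sqrt{\V[W_n]}.
\]
By Theorem~\ref{TheVar}, $\sqrt{\V[W_n]}$ is of order $n^{1/2}$ for $\Lambda<\frac12$ and of order $(n\ln n)^{1/2}$ for $\Lambda=\frac12$, while the deterministic part is $O(n^{1/2})$ and hence absorbed (indeed of strictly smaller order than $\sqrt{\V[W_n]}$ in the nondegenerate case). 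This yields $W_n=\mu n+\OL\bigl(\sqrt{\V[W_n]}\bigr)$ and, after inserting the two variance orders, the $\OL(\sqrt n)$ and $\OL(\sqrt{n\ln n})$ cases.

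For the second assertion I would use the exact identity
\[
W_n^2-\mu^2 n^2 = 2\mu n\,(W_n-\mu n)+(W_n-\mu n)^2,
\]
take absolute values and expectations, and estimate the two summands separately. The first is controlled by the first assertion, $2\mu n\,\E\bigl|W_n-\mu n\bigr|=O\bigl(n\sqrt{\V[W_n]}\bigr)$, while the second is the bias--variance decomposition $\E\bigl[(W_n-\mu n)^2\bigr]=\V[W_n]+\bigl(\E[W_n]-\mu n\bigr)^2=O\bigl(\V[W_n]\bigr)$, the squared discrepancy being of lower order. As $\V[W_n]$ grows at least linearly, the first summand dominates, so $\E\bigl|W_n^2-\mu^2 n^2\bigr|=O\bigl(n\sqrt{\V[W_n]}\bigr)$; using $\sqrt{\V[W_n]}=O\bigl(\V[W_n]/\sqrt n\bigr)$ this is $\OL\bigl(n^{1/2}\,\V[W_n]\bigr)$, and the variance orders give the $\OL(n^{3/2})$ and $\OL(n^{3/2}\ln n)$ cases.

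These computations are routine once Theorem~\ref{TheVar} and~\eqref{ExpansionEn} are available. The only point requiring genuine care, and the closest thing to an obstacle, is the order comparison between the deterministic mean-correction $\E[W_n]-\mu n$, whose square is of order $n^{2\Lambda}$, and the variance $\V[W_n]$: one must confirm that the variance dominates (or at least matches) throughout the range $\Lambda\le\frac12$, including $\Lambda<0$, where the correction reduces to $O(1)$, and the critical case $\Lambda=\frac12$, where $n^{2\Lambda}=n$ is kept subordinate only by the extra logarithmic factor carried by $\V[W_n]$.
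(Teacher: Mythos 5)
Your proposal is correct and follows essentially the same route as the paper: the paper bounds $\E[(W_n-\tfrac{a_m}{1-\Lambda}n)^2]=\V[W_n]+(\E[W_n]-\tfrac{a_m}{1-\Lambda}n)^2=\Gro(\V[W_n])$ and then applies Jensen, which is the same bias--variance-plus-Jensen argument you give (you merely apply the triangle inequality before Jensen rather than after), and your expansion of $W_n^2-\mu^2n^2$ is exactly the ``follows by squaring'' step the paper leaves implicit. Your closing remark about checking that $\sqrt{\V[W_n]}$ dominates the $O(n^{\Lambda})$ mean-correction across the whole range $\Lambda\le\tfrac12$ is the right point of care and is consistent with the paper's expansions~\eqref{ExpansionEn} and Theorem~\ref{TheVar}.
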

\begin{proof}
From the asymptotics of the mean and variance, as given in Proposition~\eqref{MuliDrawsLinProp1}, \eqref{ExpansionEn} and Theorem~\ref{TheVar}, for large~$n$
we have
%changed parentheses to square brackets
\begin{align}
\E\Bigl[\Bigl(W_n -   \frac{a_m}{1-\Lambda} n \Bigr)^2\Bigr] 
          &= \E\Bigl[\Bigl(\bigl(W_n -  \E[W_n]\bigr) + \Bigl(\E[W_n] - \frac{a_m}{1-\Lambda}\, n\Bigr) \Bigr)^2\Bigr]\nonumber \\
          &=\V[W_n] + \Bigl(\E[W_n] -  \frac{a_m}
                    {1-\Lambda}\, n  \Bigr)^2\nonumber\\
          &= \Gro(\V[W_n]).
\label{Eq:Lonenorm}
\end{align}
So, by Jensen's inequality
$$\E\Bigl[\Bigl | W_n - \frac{a_m}{1-\Lambda}\, n  \Bigr |\Bigr]
\le \sqrt {\E\Bigl[\Bigl(W_n-  \frac{a_m}{1-\Lambda}\, n  \Bigr)^2\Bigr] }
 = \Gro (\V[W_n]),$$
and this implies  
$$W_n =   \frac{a_m}{1-\Lambda}\, n  + \OL(\V[W_n]).$$
%Moreover, by the Cauchy-Schwarz inequality we have
%\begin{align*}
%\E\Bigl[\Bigl |W_n ^2 -  \Bigl(\frac{a_m}{1-\Lambda}\Bigr)^2\, n^2 \Bigl |\Bigr]
 %  &=   \E\Bigl[\Bigl |W_n +  \frac{a_m}{1-\Lambda}\, n\Bigr | \times\Bigl |W_n -  \frac{a_m}{1-\Lambda}\, n \Bigr| \Bigr] \\
 %  &\le \sqrt {\E\Bigl[\Bigl(W_n +  \frac{a_m}{1-\Lambda}\, n \Bigr)^2\Bigr] }\times \sqrt{ \E\Bigl[\Bigl(W_n - \frac{a_m}{1-\Lambda}\, n \Bigr)^2\Bigr]}\, .
%\end{align*}
%We obviously have 
%$W_n +  \frac{a_m}{1-\Lambda}\, n \le \sigma n +  \frac{a_m}{1-\Lambda}\, n  = \Gro(n)$.
%We employ~(\ref{Eq:Lonenorm}) to find the bound
%$$\sqrt{\E\Bigl[\Bigl(W_n -  \frac{a_m}{1-\Lambda}\, n \Bigr)^2\Bigr]} 
  % = \Gro(\sqrt {\V[W_n]}\,).$$
%Combining the two bounds, we obtain 
%$$\E\Bigl[\Bigl |W_n ^2 -  \Bigl(\frac{a_m}{1-\Lambda}\Bigr)^2\, n^2 \Bigl |\Bigr] =   \Gro\bigl(n^{\frac  1 2}\V[W_n]\bigr),$$
%which implies
%$$W_n^2 =  \Bigl(\frac{a_m}{1-\Lambda}\Bigr)^2\, n^2+ \OL \bigl(n^{\frac 1 2}\V[W_n]\bigr),$$
%and the stated results using the asymptotic expansion of the variance in Theorem~\ref{TheVar}.
%\end{comment}
The second part follows by squaring.
\end{proof}
\begin{coroll}
\label{Lm:convL}
Let $W_n$ be the number of white balls in the urn after $n$ draws.
Then, we have
$$\frac {W_n} n \inL \frac {a_m} {1 - \Lambda},$$ 
and
$$\frac {W_n^2} {n^2} \ \inL \Bigl(\frac{a_m}{1-\Lambda}\Bigr)^2.$$
So, both convergences occur in probability, too.  
\end{coroll}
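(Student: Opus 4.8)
The plan is to obtain both $L_1$ convergences as immediate rescalings of Corollary~\ref{Lm:Lconcentration}, and then to invoke the standard implication that convergence in $L_1$ forces convergence in probability. The essential observation is that the error terms recorded in Corollary~\ref{Lm:Lconcentration} are, by the very definition of the $\OL$ symbol (the footnote there), bounds on the $L_1$ norm of the deviation; once these bounds are divided by the appropriate power of $n$ they vanish, which is exactly what $\inL$ convergence demands. Thus no new analytic input is needed beyond the variance expansion of Theorem~\ref{TheVar} and the concentration statement already established.

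For the first assertion I would start from Corollary~\ref{Lm:Lconcentration}, which furnishes a constant $C$ with $\E\bigl[|W_n-\frac{a_m}{1-\Lambda}\,n|\bigr]\le C\sqrt{\V[W_n]}$ for large $n$. Dividing by $n$ yields
\[
\E\Bigl[\Bigl|\tfrac{W_n}{n}-\tfrac{a_m}{1-\Lambda}\Bigr|\Bigr]\le \frac{C\sqrt{\V[W_n]}}{n}.
\]
Inserting $\V[W_n]=O(n)$ for $\Lambda<\frac12$ and $\V[W_n]=O(n\ln n)$ for $\Lambda=\frac12$ from Theorem~\ref{TheVar}, the right-hand side is $O(n^{-1/2})$, respectively $O(n^{-1/2}\sqrt{\ln n}\,)$, and hence tends to zero; this is precisely $\frac{W_n}{n}\inL\frac{a_m}{1-\Lambda}$. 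For the second assertion I would proceed identically, dividing the relation $W_n^2=\bigl(\frac{a_m}{1-\Lambda}\bigr)^2 n^2+\OL\bigl(n^{1/2}\V[W_n]\bigr)$ from Corollary~\ref{Lm:Lconcentration} by $n^2$, so that the error bound becomes $O\bigl(\V[W_n]/n^{3/2}\bigr)$, namely $O(n^{-1/2})$ for $\Lambda<\frac12$ and $O(n^{-1/2}\ln n)$ for $\Lambda=\frac12$, again vanishing as $n\to\infty$.

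The passage to convergence in probability is then a one-line application of Markov's inequality: for any $\varepsilon>0$,
\[
\P\Bigl(\Bigl|\tfrac{W_n}{n}-\tfrac{a_m}{1-\Lambda}\Bigr|>\varepsilon\Bigr)\le \frac1\varepsilon\,\E\Bigl[\Bigl|\tfrac{W_n}{n}-\tfrac{a_m}{1-\Lambda}\Bigr|\Bigr]\longrightarrow 0,
\]
and similarly for $W_n^2/n^2$. I do not anticipate any genuine obstacle here, since the heavy lifting was already absorbed into Theorem~\ref{TheVar} and Corollary~\ref{Lm:Lconcentration}. The only point deserving a moment of care is the exponent bookkeeping needed to confirm that each rescaled error sequence really tends to zero separately in the small-index ($\Lambda<\frac12$) and critical-index ($\Lambda=\frac12$) regimes; the limit constant $\frac{a_m}{1-\Lambda}$ is well defined throughout because $\Lambda\le\frac12<1$.
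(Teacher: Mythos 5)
Your proposal is correct and matches the paper's (implicit) argument: the paper states this corollary without proof precisely because it follows by dividing the $\OL$ bounds of Corollary~\ref{Lm:Lconcentration} by $n$ and $n^2$ and checking that the rescaled error terms vanish, with convergence in probability then following from Markov's inequality. Your exponent bookkeeping in both the $\Lambda<\frac12$ and $\Lambda=\frac12$ regimes is accurate.
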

\subsection{Martingale central limit theorem}
We follow the approach used in~\cite{KuMaPan2013+,Mah2012} used for special urns. 
%Removed (commenter out) the following
%Let $ {\mathcal{W}}_j = \mathcal{W}_j 
%       - \mathcal{W}_0$, so that $ {\mathcal{W}}_j$ is centered.
We would obtain a Gaussian law for $ {\mathcal{W}_j}$, if a set of conditions for the martingale
central limit theorems are satisfied.
There is more than one such set (see~\cite{Hall}). 
One set of such conditions convenient in our work is the combined
conditional Lindeberg's condition and the conditional variance condition.
The conditional Lindeberg condition 
requires that, for some positive
increasing sequence~$\xi_n$, and for all $\varepsilon>0$,
$$U_n := \sum_{j=1} ^n \E\Bigl[\Bigl(\frac{\nabla {\mathcal{W}}_j}
         {\xi_n}\Bigr)^2 \indicator \Bigl(\Big|
                   \displaystyle \frac{\nabla {\mathcal{W}}_j}
             {\xi_n}\Bigr| > \varepsilon\Bigr)\Bigiven \field_{j-1}\Bigr]\inprob 0,$$
and the conditional variance condition requires that, for some square integrable 
random variable $Y \neq 0$, we have
\begin{align*}
V_n: =\sum^n_{j=1}\E\Bigl[\Bigl(\frac{\nabla {\mathcal{W}}_j}{\xi_n}\Bigr)^2 \Bigiven \field_{j-1}\Bigr]\inprob Y.
\end{align*}
When these conditions are satisfied, we get
$$\frac {W_n - \frac{a_m} {1 - \Lambda} \, n} {\xi_n} \ \convD \ \normal (0, Y),$$
where the right-hand side is a mixture of normals, with 
$Y$ being the mixer.
It will turn out that the correct scale factors are 
\begin{equation}
\xi_n=
\begin{cases}
n^{\frac 1 2 - \Lambda},\quad\text{for }\Lambda<\frac12,\\
\ln n,\quad\text{for }\Lambda=\frac12.
\end{cases}
\label{eq:}
\end{equation}
\begin{lemma}
\label{Lm:uniformbound}
The terms $|\nabla {\mathcal{W}}_j| $ satisfy $|\nabla \mathcal{W}_j|\le 
K j^{-\Lambda}$
%K\cdot j^{-\Lambda} $, 
for some positive constant $K$ and $1\le j \le n$.
\end{lemma}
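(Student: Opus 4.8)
The plan is to exploit the martingale identity directly rather than estimate the difference $g_jW_j-g_{j-1}W_{j-1}$ term by term. First I would invoke the martingale property from Proposition~\ref{Prop:caligraphicW}, namely $\E[\calW_j\given\field_{j-1}]=\calW_{j-1}$, to rewrite the backward difference as a centered one-step increment:
$$\nabla\calW_j=\calW_j-\calW_{j-1}=\calW_j-\E[\calW_j\given\field_{j-1}].$$
Since $\calW_j=g_jW_j-a_m\sum_{i=1}^j g_i-W_0$ and every subtracted term is $\field_{j-1}$-measurable, this collapses to $\nabla\calW_j=g_j\bigl(W_j-\E[W_j\given\field_{j-1}]\bigr)$. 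The point of this step is to isolate the only random quantity, $W_j-\E[W_j\given\field_{j-1}]$, and to pull the deterministic factor $g_j$ out front, so that all of the $j$-dependence can be tracked through $g_j$ alone.

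Next I would bound the one-step increment of $W_j$ uniformly in $j$. From the stochastic recurrence~\eqref{MuliDrawsLinDistEqn1} we have $W_j-W_{j-1}=\sum_{k=0}^{m} a_{m-k}\,\mathbb{I}_j(W^kB^{m-k})$, and since exactly one of the mutually exclusive indicators $\mathbb{I}_j(W^kB^{m-k})$ equals $1$ at each draw, this increment is simply the single coefficient $a_{m-K}$ corresponding to the (random) number $K$ of white balls sampled. Hence $|W_j-W_{j-1}|\le A$, where $A:=\max_{0\le k\le m}|a_k|$ is a constant fixed by the ball-replacement matrix. Writing $W_j-\E[W_j\given\field_{j-1}]=(W_j-W_{j-1})-\E[W_j-W_{j-1}\given\field_{j-1}]$ and applying the triangle inequality then gives $|W_j-\E[W_j\given\field_{j-1}]|\le 2A$ almost surely.

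Finally I would combine these bounds with the asymptotics of $g_j$. Because $\alpha_n>0$ and $T_j>0$, every factor in $g_j$ is positive, so $g_j>0$; and the expansion~\eqref{ExpansionGn} yields $g_j=\frac{\Gamma(\frac{T_0}\sigma+\Lambda)}{\Gamma(\frac{T_0}\sigma)}\,j^{-\Lambda}\bigl(1+O(1/j)\bigr)$, so that $g_j\le K' j^{-\Lambda}$ for a suitable constant $K'$ and all $j\ge 1$. Assembling the pieces gives $|\nabla\calW_j|=g_j\,|W_j-\E[W_j\given\field_{j-1}]|\le 2A\,g_j\le K\,j^{-\Lambda}$ with $K:=2AK'$, which is the claim.

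I do not expect a serious obstacle here, as the whole argument is a boundedness estimate. The single point that genuinely must be observed---and the one that makes the clean $j^{-\Lambda}$ bound possible---is that the per-step increment $W_j-W_{j-1}$ is bounded by a constant independent of $j$, which rests on the mutual exclusivity of the sampling indicators together with the fact that the entries $a_k$ are fixed. Once this is in hand, all of the $j$-dependence of $|\nabla\calW_j|$ is carried solely by the deterministic prefactor $g_j$, whose decay exponent $j^{-\Lambda}$ is exactly the one appearing in the statement.
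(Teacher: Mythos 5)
Your proof is correct, and it takes a slightly different route from the paper's. The paper expands the increment directly as $\nabla\mathcal{W}_j = W_{j-1}\,\nabla g_j + g_j\omega_j - a_m g_j$ and bounds the three terms separately, using $W_{j-1}\le T_{j-1}=\Gro(j)$ together with the estimate $|\nabla g_j|=\Gro(j^{-1-\Lambda})$ to control the first term, and $g_j=\Gro(j^{-\Lambda})$ for the other two. You instead invoke the martingale identity to collapse the increment to $\nabla\mathcal{W}_j = g_j\bigl(W_j-\E[W_j\given\field_{j-1}]\bigr)=g_j\bigl(\omega_j-\E[\omega_j\given\field_{j-1}]\bigr)$, after which the uniform bound $|\omega_j|\le\max_k|a_k|$ (from the mutual exclusivity of the sampling indicators) does all the work. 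The two decompositions are algebraically equivalent, but yours avoids any estimate on $\nabla g_j$ and any appeal to the linear growth of $T_{j-1}$: all the $j$-dependence is carried by the single factor $g_j$, whose positivity and $\Gro(j^{-\Lambda})$ decay (with the constant enlarged to cover finitely many initial indices, as you note) follow from the standing assumption $\alpha_n>0$ and the expansion~\eqref{ExpansionGn}. Both arguments rest on the same two ingredients --- boundedness of the per-step ball addition and the decay of $g_j$ --- so the gain is one of economy rather than substance, but your version is the cleaner of the two.
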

\begin{proof}
Suppose $\omega_j=W_j-W_{j-1}$ is the random number of white 
balls added right after the $j$th drawing. Starting from the definition of $\mathcal{W}_j$, we write the absolute difference as
% remove W_0 (twice  from the top line in the following chain
\begin{align*}
|\nabla {\mathcal{W}}_j| &= \ | {\mathcal{W}_j}  -  {\mathcal{W}_{j-1}}  | \\ 
			&= \Big| \Bigl(g_j W_j - a_m \sum_{k=1}^{j} g_k -W_0\Bigr) - \Bigl(g_{j-1} W_{j-1} - a_m \sum_{k=1}^{j-1} g_k -W_0\Bigr)\Big|\\
    &= \big| g_j (W_{j-1} + \omega_j) - a_m g_{j} -g_{j-1} W_{j-1}\big|\\
     &= \big| W_{j-1}  \nabla g_j  + g_j \omega_j - a_m g_{j} \big|\\
    &\le  T_{j-1} \, | g_j - g_{j-1}  |+ q  g_j  + q g_{j-1},
\end{align*}
with $q = \max_{0\le k\le m} |a_k|$. By definition of $g_n$ and the asymptotic expansion~\eqref{ExpansionGn} $g_j=\OO(j^{-\Lambda})$ and further $| g_j - g_{j-1} |=\OO(j^{-1-\Lambda})$. 
Consequently, there exists a constant $K>0$ such that 
$$
|\nabla {\mathcal{W}}_j|  \le K j^{-\Lambda}
%K\cdot j^{-\Lambda}.
$$
\end{proof}
\begin{lemma}
\label{Lm:Un}
$$U_n = \sum^n_{j=1}\E\Bigl[\Bigl(\frac{\nabla\mathcal{W}_j}{\xi_n}\Bigr)^2{\indicator} \Bigl(\Big|
                   \displaystyle \frac{\nabla\mathcal{W}_j}
             {\xi_n}\Bigr| > \varepsilon\Bigr)\Bigiven 
             \field_{j-1}\Bigr]\inprob 0.$$
\end{lemma}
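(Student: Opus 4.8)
The plan is to verify Lindeberg's condition by exploiting the uniform bound from Lemma~\ref{Lm:uniformbound}, which is the crux of the argument. By that lemma there is a constant $K>0$ with $|\nabla\mathcal{W}_j|\le K j^{-\Lambda}$ for all $1\le j\le n$. The key observation is that the quantity $|\nabla\mathcal{W}_j/\xi_n|$ can be made uniformly small once $n$ is large enough, so that the indicator $\indicator(|\nabla\mathcal{W}_j/\xi_n|>\varepsilon)$ vanishes identically for every $j$ in the range. First I would bound the individual summand as
$$
\Big|\frac{\nabla\mathcal{W}_j}{\xi_n}\Big|\le \frac{K j^{-\Lambda}}{\xi_n}\le \frac{K}{\xi_n},
$$
where in the last step I use $j^{-\Lambda}\le 1$ when $\Lambda\ge 0$ (and for $\Lambda<0$ one uses instead $j^{-\Lambda}\le n^{-\Lambda}$, absorbing the extra factor as shown below).

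Next I would treat the two scaling regimes separately. For $\Lambda<\tfrac12$ the scale factor is $\xi_n=n^{\frac12-\Lambda}$, and for $0\le\Lambda<\tfrac12$ we obtain $|\nabla\mathcal{W}_j/\xi_n|\le K n^{-\frac12+\Lambda}\to 0$ as $n\to\infty$. For $\Lambda<0$ the sharper bound $|\nabla\mathcal{W}_j|\le K j^{-\Lambda}\le K n^{-\Lambda}$ combined with $\xi_n=n^{\frac12-\Lambda}$ gives $|\nabla\mathcal{W}_j/\xi_n|\le K n^{-1/2}\to 0$; in either case the upper bound is independent of $j$. For the critical case $\Lambda=\tfrac12$ we have $\xi_n=\ln n$ and $|\nabla\mathcal{W}_j/\xi_n|\le K/\ln n\to 0$. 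In all cases, for any fixed $\varepsilon>0$ there exists $n_0=n_0(\varepsilon)$ such that for all $n\ge n_0$ we have $\max_{1\le j\le n}|\nabla\mathcal{W}_j/\xi_n|\le\varepsilon$, so every indicator $\indicator(|\nabla\mathcal{W}_j/\xi_n|>\varepsilon)$ is identically zero.

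Consequently, for $n\ge n_0$ every summand in the defining sum for $U_n$ vanishes almost surely, so $U_n=0$ for all $n\ge n_0$. This is even stronger than the required convergence in probability: the sequence is eventually deterministically zero, hence $U_n\inprob 0$ trivially.

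I expect the main (and essentially only) obstacle to be bookkeeping the case distinctions on the sign of $\Lambda$ so that the uniform bound on $|\nabla\mathcal{W}_j/\xi_n|$ genuinely holds uniformly in $j$; once the correct power of $n$ is tracked in each regime, the uniform-boundedness of the differences forces the Lindeberg indicators to switch off for large $n$, and no delicate estimate is required. The real analytic content has already been front-loaded into Lemma~\ref{Lm:uniformbound}, which furnishes the decisive pointwise bound; the present lemma is then a short deduction from that bound together with the explicit choice of $\xi_n$.
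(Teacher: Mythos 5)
Your proof is correct and follows essentially the same route as the paper: both arguments use the uniform bound $|\nabla\mathcal{W}_j|\le K j^{-\Lambda}$ from Lemma~\ref{Lm:uniformbound}, split into the cases $\Lambda<0$, $0<\Lambda<\tfrac12$, and $\Lambda=\tfrac12$, and conclude that $|\nabla\mathcal{W}_j/\xi_n|$ is bounded uniformly in $j$ by a quantity tending to $0$, so the Lindeberg indicators switch off for $n\ge n_0(\varepsilon)$. Your version is in fact slightly cleaner, since once all indicators vanish one gets $U_n=0$ outright, whereas the paper additionally (and redundantly) bounds a residual sum over $j\le n_0(\varepsilon)$ by $K^2 n_0(\varepsilon)/\xi_n^2\to 0$.
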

\begin{proof}
Choose any $\varepsilon > 0$. Concerning $\Lambda<\frac 1 2$
we distinguish between $\Lambda<0$ and $0<\Lambda<\frac 1 2$. Lemma~\ref{Lm:uniformbound} asserts that for arbitrary $j$ with $1\le j\le n$
$$
 \frac{\nabla {\mathcal{W}}_j}
             {\xi_n} \le 
              \begin{cases}
              \frac{K}{n^{\frac12}},& \text{for }\Lambda<0;\\
              \frac{K}{j^{\Lambda} n^{\frac 1 2 - \Lambda}} \le \frac{K}{n^{\frac12-\Lambda}} , & \text{for }0<\Lambda<\frac12;\\
              \frac{K}{j^{\frac12}\ln n}\le  \frac{K}{\ln n},  &\text{for }
                \Lambda=\frac12.\\
             \end{cases}
$$
Hence, the sets $\{|\backward \mathcal{W}_j |> \varepsilon \, \xi_n\}$ are all empty, regardless of $\Lambda<0$, $0<\Lambda<\frac12$ or $\Lambda =\frac12$,
for all $n$ greater than some positive integer $n_0(\varepsilon)$. For large $n$ (namely, $n > n_0(\varepsilon)$), we can
stop the sum at $n_0(\varepsilon)$. By in Lemma~\ref{Lm:uniformbound},
we get
\begin{align*}
U_n &= \sum_{j=1}^{n_0(\varepsilon)} \E\Bigl[\Bigl(\frac { \backward\mathcal{W}_j}
     {\xi_n }\Bigr)^2
      \indicator \Bigl(\Big| \displaystyle \frac {\backward\mathcal{W}_j} {\xi_n} \Big| >
      \varepsilon\Bigr)
                     \,  \Big |\, \field_{j-1}\Bigr] \le  \frac 1 {\xi_n^2} \sum_{j=1}^{n_0(\varepsilon)} 
                 \E\Bigl[K^2
                \, \Big | \,  \field_{j-1}\Bigr] \le  \frac {K^2 n_0(\varepsilon)}  {\xi_n^2} \to 0,
\end{align*}
for $n\to\infty$.
\end{proof}
\begin{lemma}
\begin{align*}
V_n =\sum^n_{j=1}\E\Bigl[\Bigl(\frac{ \nabla\mathcal{W}_j}{\xi_n}\Bigr)^2 \Bigiven \field_{j-1}\Bigr]\inprob \frac{a_mQ^2(\sigma(1-\Lambda) - a_m)\Lambda^2} {m(1-\Lambda)^2(1 -2 \Lambda)}.
\end{align*}
%where $c$ is the smallest integer for which the argument
%of the gamma function in the numerator is positive.
\end{lemma}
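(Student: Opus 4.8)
The plan is to reduce $V_n$ to a weighted Ces\`aro average of the conditional variances of the sampled white counts, and then evaluate that average using the almost-sure convergence of $W_{j-1}/T_{j-1}$ established earlier. First I would make the increment $\nabla\calW_j$ explicit. Writing $\omega_j=W_j-W_{j-1}$ and using the affinity condition $a_{m-k}=k(a_{m-1}-a_m)+a_m$ from Proposition~\ref{MuliDrawsLinPropLinear}, the number of white balls drawn at step $j$, namely $X_j:=\sum_{k=0}^m k\,\indicator_j(W^kB^{m-k})$, satisfies $\omega_j=(a_{m-1}-a_m)X_j+a_m$. Hence, as in the proof of Lemma~\ref{Lm:uniformbound},
$$\nabla\calW_j=W_{j-1}\nabla g_j+g_j\omega_j-a_mg_j=W_{j-1}\nabla g_j+(a_{m-1}-a_m)\,g_jX_j.$$
Since $W_{j-1}\nabla g_j$ is $\field_{j-1}$-measurable, $g_j$ is deterministic, and $\calW_n$ is a martingale (so $\E[\nabla\calW_j\given\field_{j-1}]=0$), the conditional second moment equals the conditional variance, and only $X_j$ is random given $\field_{j-1}$:
$$\E\big[(\nabla\calW_j)^2\given\field_{j-1}\big]=(a_{m-1}-a_m)^2\,g_j^2\,\V[X_j\given\field_{j-1}].$$

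Next I would insert the conditional variance of $X_j$. By Section~\ref{Sec:Prelim}, $X_j$ is hypergeometric for \mom\ and binomial for \mor, so in both cases $\V[X_j\given\field_{j-1}]=m\,\frac{W_{j-1}}{T_{j-1}}\big(1-\frac{W_{j-1}}{T_{j-1}}\big)(1+\rho_j)$, where $\rho_j=0$ for \mor\ and $\rho_j=-\frac{m-1}{T_{j-1}-1}\to 0$ for \mom. Using $a_{m-1}-a_m=\frac{\sigma\Lambda}{m}$ this gives, uniformly in both models,
$$\E\big[(\nabla\calW_j)^2\given\field_{j-1}\big]=\frac{\sigma^2\Lambda^2}{m}\,g_j^2\,\frac{W_{j-1}}{T_{j-1}}\Big(1-\frac{W_{j-1}}{T_{j-1}}\Big)(1+\rho_j).$$
The almost-sure convergence $\frac{W_{j-1}}{T_{j-1}}\as\frac{a_m}{\sigma(1-\Lambda)}$ then yields $\phi_j:=\frac{W_{j-1}}{T_{j-1}}(1-\frac{W_{j-1}}{T_{j-1}})(1+\rho_j)\as\frac{a_m\,b_0}{\sigma^2(1-\Lambda)^2}=:L$, where $b_0=\sigma(1-\Lambda)-a_m$; note that the model-dependent factor $1+\rho_j$ is harmlessly absorbed into the convergent sequence $\phi_j$.

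It then remains to evaluate $V_n=\frac{\sigma^2\Lambda^2}{m\,\xi_n^2}\sum_{j=1}^ng_j^2\phi_j$, with $\xi_n=n^{1/2-\Lambda}$, i.e. $\xi_n^2=n^{1-2\Lambda}$, for $\Lambda<\tfrac12$. From the expansion~\eqref{ExpansionGn} I would write $g_j^2=Q^2 j^{-2\Lambda}(1+O(j^{-1}))$ with $Q=\Gamma(\tfrac{T_0}\sigma+\Lambda)/\Gamma(\tfrac{T_0}\sigma)$, so that~\eqref{KNUTH1} gives $\sum_{j=1}^n g_j^2\sim Q^2\frac{n^{1-2\Lambda}}{1-2\Lambda}=\frac{Q^2}{1-2\Lambda}\,\xi_n^2$. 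The deterministic weights $w_j:=g_j^2\ge 0$ satisfy $\sum_{j\le n}w_j\to\infty$ and $w_n/\sum_{j\le n}w_j\to 0$, so a Toeplitz (weighted-average) argument converts $\phi_j\as L$ into $\frac{\sum_{j\le n}w_j\phi_j}{\sum_{j\le n}w_j}\as L$. Multiplying by $\frac{\sum_{j\le n}w_j}{\xi_n^2}\to\frac{Q^2}{1-2\Lambda}$ and by $\frac{\sigma^2\Lambda^2}{m}$ gives
$$V_n\as\frac{\sigma^2\Lambda^2}{m}\cdot\frac{a_m b_0}{\sigma^2(1-\Lambda)^2}\cdot\frac{Q^2}{1-2\Lambda}=\frac{a_m Q^2(\sigma(1-\Lambda)-a_m)\Lambda^2}{m(1-\Lambda)^2(1-2\Lambda)},$$
which is the claimed limit, and almost-sure convergence implies the asserted $\inprob$ convergence.

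The hard part will be the Toeplitz step: converting the pointwise a.s. limit $\phi_j\to L$ into a limit of the normalized weighted sum $\frac1{\xi_n^2}\sum_{j\le n}g_j^2\phi_j$, and confirming $\sum_{j\le n}g_j^2\sim\frac{Q^2}{1-2\Lambda}n^{1-2\Lambda}$ from the full expansion of $g_j$ (one must check the $O(j^{-1})$ corrections contribute only $o(\xi_n^2)$). A secondary point is the boundary case $\Lambda=\tfrac12$, where the displayed constant degenerates; there the same computation with the critical scale and~\eqref{KNUTH2} in place of~\eqref{KNUTH1} produces the corresponding non-degenerate mixer.
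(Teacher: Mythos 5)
Your proof is correct, but it follows a genuinely different route from the paper's. The paper expands $(\nabla\calW_j)^2=\bigl((\nabla g_j)W_{j-1}+g_j\omega_j-a_mg_j\bigr)^2$ into its six cross terms using $g_j\approx Qj^{-\Lambda}$ and $\nabla g_j\approx -Q\Lambda j^{-\Lambda-1}$, substitutes the ${\rm L}_1$ approximations $W_{j-1}/j\inL a_m/(1-\Lambda)$ and $W_{j-1}^2/j^2\inL (a_m/(1-\Lambda))^2$ from Corollary~\ref{Lm:convL} together with the exact conditional moments of $\omega_j$ from~\eqref{MuliDrawsOmegaJEins} and~\eqref{MuliDrawOmegaJZwei1}, and arrives at $\E\bigl[(\nabla\calW_j)^2\given\field_{j-1}\bigr]=Cj^{-2\Lambda}+\Gro_{L_1}(j^{-2\Lambda-1})$, whence $V_n$ converges in ${\rm L}_1$ after summation via~\eqref{KNUTH1}. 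You instead observe that, conditionally on $\field_{j-1}$, the increment $\nabla\calW_j=W_{j-1}\nabla g_j+(a_{m-1}-a_m)g_jX_j$ is an affine function of the sampled white count $X_j$, so that the conditional mean-zero property gives the \emph{exact} identity $\E\bigl[(\nabla\calW_j)^2\given\field_{j-1}\bigr]=(a_{m-1}-a_m)^2g_j^2\,\V[X_j\given\field_{j-1}]$, into which the binomial/hypergeometric variance is inserted; the limit then comes from the almost-sure convergence $W_n/T_n\as a_m/(\sigma(1-\Lambda))$ and a Toeplitz weighted-average argument. Your route buys a cleaner computation (no cross-term bookkeeping, an exact conditional identity rather than an asymptotic one, a transparent treatment of the two sampling models through the single correction factor $1+\rho_j$) and yields almost-sure convergence of $V_n$, which is stronger than the in-probability statement required; its cost is that it leans on the supermartingale proposition for $W_n/T_n$ and on a Toeplitz lemma not invoked in the paper, whereas the paper stays entirely within its moment estimates. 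All the ingredients you use (positivity of $g_j$, boundedness of $\phi_j$, $\sum_{j\le n}g_j^2\sim Q^2n^{1-2\Lambda}/(1-2\Lambda)$ for $\Lambda<\tfrac12$, the algebra $b_0=\sigma(1-\Lambda)-a_m$) check out, and your remark that $\Lambda=\tfrac12$ requires the critical normalization and~\eqref{KNUTH2} matches the paper's separate treatment of that case.
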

\begin{proof}
Let $Q := \frac {\Gamma(\frac{T_0} \sigma + \Lambda)} {\Gamma(\frac {T_0} \sigma)}.$
By~\eqref{ExpansionGn} we have 
$$g_n = Q n^{-\Lambda} + O (n^{-\Lambda-1}).$$
From this asymptotic relation, we also have
$$\nabla g_n = - Q \Lambda n^{-\Lambda-1} + O (n^{-\Lambda-2}).$$
As in the proof of Lemma~\ref{Lm:Un}, we write
$$ {\nabla\mathcal{W}}_j 
       = (\nabla g_j) W_{j-1} + g_j \omega_j                   
                -a_m g_{j-1}.$$
And so, we can write    
$$\bigl( {\nabla\mathcal{W}}_j\bigr)^2
     = \frac {Q^2}   {j^{2\Lambda}}\Bigl(\Lambda^2\frac {W_{j-1}^2} {j^2} -2\Lambda\omega_j\frac{W_{j-1}} j+2\Lambda a_m \frac {W(j-1)} j 
                  + \omega_j^2 -2a_m\omega_j + a_m^2 \Bigr)  .$$      
Using the L${}_1$ approximation in Corollary~\ref{Lm:convL}, we write
the conditional expectation
\begin{align*}
\E\bigl[\bigl( {\nabla\mathcal{W}}_j\bigr)^2 \given \field_{j-1}\bigr]
    & = \frac {Q^2}   {j^{2\Lambda}}\Bigl(\Lambda^2 \Bigl(\frac {a_m} {1-\Lambda}\Bigr)^2 
                 - 2\Bigl(\Lambda\Bigl(\frac {a_m} {1-\Lambda}\Bigr)+ a_m\Bigr)\E\bigl[\omega_j\given \field_{j-1}\bigr]  
                         + 2\Lambda a_m \Bigl(\frac {a_m} {1-\Lambda}\Bigr) \\
           &\qquad {} + \E\bigl[\omega_j^2\given \field_{j-1}\bigr]  + a_m^2 \Bigr) + \Gro_{L_1}\Bigl( \frac 1 {j^{2\Lambda+1}}\Bigr) .
\end{align*}
We already know exactly the conditional expectations of $\omega_j$ and $\omega_j^2$ from~\eqref{MuliDrawsOmegaJEins} and
~\eqref{MuliDrawOmegaJZwei1}, respectively:
\[
\E\bigl[\omega_j\given \field_{j-1}\bigr] = \frac{\sigma\Lambda}{T_{j-1}}W_{j-1}+a_m
\]
and 
\begin{equation*}
\begin{split}
\E\bigl[\omega_j^2\given \field_{j-1}\bigr] &=
\E[W_j^2\given \field_{j-1}]-2W_{j-1}\E[W_j\given \field_{j-1}]+W_{j-1}^2\\
&=\big(\alpha_j-2(1+\frac{\sigma\Lambda}{T_{j-1}})+1\big)W_{j-1}^2+(\beta_j-2a_m)W_{j-1}+\gamma_j,
\end{split}
\end{equation*}
with model-dependent sequendes $\alpha_j$, $\gamma_j$ as given in~\eqref{MuliDrawOmegaJZwei1} and $\beta_j$ in Proposition~\ref{MuliDrawsLinProp2}.
Consequently, using asymptotic expansions of $\alpha_j$, $\beta_j$, $\gamma_j$, and Corollary~\ref{Lm:convL} we obtain the following model-independent expansions  
\[
\E\bigl[\omega_j\given \field_{j-1}\bigr] =\frac{a_m}{1-\Lambda}+\Gro_{L_1}\Bigl( \frac {1}{ \sqrt{j}}\Bigr),
\] 
and
\[
\E\bigl[\omega_j^2\given \field_{j-1}\bigr] =\frac{a_m(\Lambda^2(\sigma-\Lambda\sigma-a_m)+ma_m)}{m(1-\Lambda)^2}+\Gro_{L_1}\Bigl(  \frac {1}{ \sqrt{j}}\Bigr).
\] 
Putting all the elements together and simplifying, we obtain
$$\E\bigl[\bigl( {\nabla\mathcal{W}}_j\bigr)^2 \given \field_{j-1}\bigr] = \frac{a_m Q^2(\sigma(1-\Lambda) - a_m)\Lambda^2} {m j^{2\Lambda}(1-\Lambda)^2} +  \Gro_{L_1}\Bigl( \frac 1 {j^{2\Lambda+1}}\Bigr).$$ 
Now we can sum using \eqref{KNUTH1} and \eqref{KNUTH2}. We get for $\Lambda<\frac12$ 
$$
V_n =\frac{a_m Q^2(\sigma(1-\Lambda) - a_m)\Lambda^2} {m(1-\Lambda)^2(1 -2 \Lambda)} +  \Gro_{L_1}\Bigl( \frac 1n\Bigr) \inL \frac{a_m Q^2(\sigma(1-\Lambda) - a_m)\Lambda^2} {m(1-\Lambda)^2(1 -2 \Lambda)},
$$
and, for $\Lambda=\frac12$ we get,
$$
V_n =\frac{a_m Q^2(\frac12\sigma - a_m)} {m} +  \Gro_{L_1}\Bigl( \frac 1{\log n}\Bigr) \inL \frac{a_m Q^2(\frac12\sigma - a_m)} {m}. 
$$
This implies the required convergence in probability.
\end{proof} 
Having checked the two martingale conditions, a Gaussian law follows for the nondegenerate cases:
$$\frac {\mathcal {W}_n - \frac {Q a_m} {1-\Lambda}\, n} {n^{\frac 1 2 - \Lambda}}
       \ \convD\ \normal\Bigl(0, \frac{a_m (\sigma(1-\Lambda) - a_m) Q^2\Lambda^2} {m(1-\Lambda)^2(1 -2 \Lambda)} \Bigr)$$
for $\Lambda<\frac12$ and 
$$
\frac {\mathcal {W}_n - 2Q a_m\, n} {\sqrt{\log n}}
       \ \convD\ \normal\Bigl(0, \frac{a_m (\frac12\sigma - a_m) Q^2} {m} \Bigr).
       $$
for $\Lambda=\frac12$. Translating this into a statement on the number of white balls and using $b_0 =\sigma(1-\Lambda) - a_m$ we get a main result of this investigation.
\begin{theorem}
Suppose we have a two-color tenable affine balanced urn that grows by sampling sets of size $m$
with or without replacement, and with a small index $\Lambda \le \frac 1 2$, and does not fall in any of the afore-mentioned degenerate cases.\footnote{Recall that several degenerate cases are excluded from this study, namely, the triangular case where $a_0 = 0$ or $b_0 = 0$, the zero-balance cases, and the case $T_0 +m (a_{m-1}-a_m) \le 0$.}  
Let $W_n$ be the number of white balls after $n$ draws.
For $\Lambda\le\frac12$ we have Gaussian laws:
$$\frac {W_n - \frac {a_m} {1-\Lambda}\, n} {\sqrt n}
       \ \convD\ \normal\Bigl(0,  \frac{a_m b_0 \Lambda^2} {m(1-\Lambda)^2(1-2\Lambda)}\Bigr ),\quad \text{for }\Lambda<\frac12,
       $$      
and
$$\frac {W_n - 2a_m\, n} {\sqrt{n\log n}}
       \ \convD\ \normal\Bigl(0,  \frac{a_m b_0 } {m}\Bigr ), \quad \text{for }\Lambda=\frac12.$$          
\end{theorem}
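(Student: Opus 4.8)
The plan is to harvest the martingale central limit theorem already assembled in the three preceding lemmas and merely re-express the resulting limit law for $\mathcal{W}_n$ as a statement about $W_n$. Lemma~\ref{Lm:uniformbound} supplies the uniform bound $|\nabla\mathcal{W}_j|\le Kj^{-\Lambda}$, Lemma~\ref{Lm:Un} verifies the conditional Lindeberg condition $U_n\inprob 0$, and the conditional-variance lemma establishes $V_n\inprob Y$ with $Y$ the \emph{deterministic} constant computed there. Hence the standard martingale central limit theorem (in the form cited from~\cite{Hall}) applies and gives $\mathcal{W}_n/\xi_n \convD \normal(0,Y)$, where $\xi_n=n^{\frac12-\Lambda}$ for $\Lambda<\frac12$ and $\xi_n=\sqrt{\log n}$ for $\Lambda=\frac12$ (the scale at which $V_n$ stabilizes). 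Because the limiting conditional variance is a genuine constant and not a random mixer, the limit is an honest Gaussian rather than a mixture.

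First I would invert the defining relation $\mathcal{W}_n=g_n(W_n-\E[W_n])$ to write $W_n-\E[W_n]=\mathcal{W}_n/g_n$, and then insert the asymptotics $g_n=Qn^{-\Lambda}(1+o(1))$ from~\eqref{ExpansionGn}, with $Q=\Gamma(\frac{T_0}\sigma+\Lambda)/\Gamma(\frac{T_0}\sigma)$. Since $n^{\Lambda}g_n\to Q$, this yields
$$\frac{\mathcal{W}_n}{\xi_n}=\bigl(Q+o(1)\bigr)\,\frac{W_n-\E[W_n]}{n^{\Lambda}\xi_n},$$
and $n^{\Lambda}\xi_n$ equals $\sqrt n$ when $\Lambda<\frac12$ and $\sqrt{n\log n}$ when $\Lambda=\frac12$. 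An application of Slutsky's theorem then transfers the limit law for $\mathcal{W}_n/\xi_n$ to $(W_n-\E[W_n])/\sqrt n\convD\normal(0,Y/Q^2)$ for $\Lambda<\frac12$, and analogously with $\sqrt{n\log n}$ for $\Lambda=\frac12$, the factor $Q^{2}$ being absorbed into the variance.

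Next I would replace the exact mean $\E[W_n]$ by its leading term $\frac{a_m}{1-\Lambda}n$. By the refined expansion~\eqref{ExpansionEn}, the discrepancy $\E[W_n]-\frac{a_m}{1-\Lambda}n=E_2n^{\Lambda}+E_3+O(n^{-1+\Lambda})$ is of order $n^{\Lambda}$; dividing by $\sqrt n$ gives $O(n^{\Lambda-\frac12})\to 0$ for $\Lambda<\frac12$, while dividing by $\sqrt{n\log n}$ gives $O(1/\sqrt{\log n})\to 0$ for $\Lambda=\frac12$. A further application of Slutsky's theorem (the converging-together lemma) therefore permits swapping the centering from $\E[W_n]$ to $\frac{a_m}{1-\Lambda}n$ without altering the limit. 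Substituting the explicit $Y$ and using $b_0=\sigma(1-\Lambda)-a_m$ to simplify $Y/Q^2$ produces the two stated variances $\frac{a_mb_0\Lambda^2}{m(1-\Lambda)^2(1-2\Lambda)}$ and $\frac{a_mb_0}{m}$, finishing both cases.

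The genuinely delicate point is the mean-replacement step: the second-order term $E_2n^{\Lambda}$ of the mean must be negligible at the Gaussian scale, and this is precisely what fails once $\Lambda>\frac12$, where $n^{\Lambda}$ overwhelms $\sqrt n$; this is why large-index urns are excluded and deferred to the companion paper. The boundary $\Lambda=\frac12$ survives only thanks to the extra $\sqrt{\log n}$ in the normalization, so I would verify carefully that the $o(1)$ error from $g_n$ and the $O(n^{\Lambda-\frac12})$ (resp.\ $O(1/\sqrt{\log n})$) error from the mean interact harmlessly with the weak limit, which the converging-together lemma guarantees. Finally, the exclusion of the degenerate cases (triangular, zero-balance, and $\Lambda=0$) is exactly what keeps $Y/Q^2>0$, ensuring the limiting Gaussian is nondegenerate.
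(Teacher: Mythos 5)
Your proposal is correct and follows essentially the same route as the paper: verify the conditional Lindeberg and conditional variance conditions via the three preceding lemmas, invoke the martingale CLT to get a Gaussian limit for $\mathcal{W}_n/\xi_n$, and then translate back to $W_n$ using $g_n\sim Qn^{-\Lambda}$, Slutsky, and the negligibility of the $E_2n^{\Lambda}$ mean correction at the scales $\sqrt n$ and $\sqrt{n\log n}$. The paper compresses this translation into one sentence; your write-up simply makes that step explicit.
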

%In Part II we take a combinatorial approach that refines the variance and allows us to pin down~$\Sigma^2=\frac{a_m(\sigma(1-\Lambda)-a_m)\Lambda^2}{m(1-2\Lambda)(1-\Lambda)^2}=
%\frac{a_mb_0\Lambda^2}{m(1-2\Lambda)(1-\Lambda)^2}$.	
\section{Conclusion and Outlook}
\subsection{Summary}
We studied for a two-color affine linear urn models with multiple drawings---sample size $m\ge 1$---under two sampling models the distribution of the number of white balls $W_n$ after $n$ draws.
In the following we summarize our findings according to the index $\Lambda$ and state the order of growth of the expectation and variance.
\begin{table}[!htb]
\begin{tabular}{|c|c|c|c|}
\hline
\tableT\tableB &$\Lambda<\frac12$& $\Lambda=\frac12$ & $\Lambda>\frac12$\\[1ex]
\hline
\tableT\tableB $\E[W_n]$& $ n$ & $ n$  & $n$\\
\hline
\tableT\tableB $\V[W_n]$& $ n$ & $ n\log n$  & $n^{2\Lambda}$\\
\hline
\tableT\tableB Limit law &  $\frac{W_n-\E[W_n]}{\sqrt{\V[W_n]}}\to\normal(0,1)$ & $\frac{W_n-\E[W_n]}{\sqrt{\V[W_n]}}\to\normal(0,1)$ & $\mathcal{W}_n\as\mathcal{W}_\infty$\\
\hline
\end{tabular}

\bigskip
\caption{Overview of the result for nontriangular
urns $a_m b_0\neq 0$.}
\label{Summary1}
\end{table}

Here $\mathcal{W}_n=g_n\big(W_n-\E[W_n]\big)$ with $g_n=\prod_{j=0}^{n-1}\frac{T_j}{T_j+m(a_{m-1}-a_m)}\sim C n^{-\Lambda}$. Note that for non-normal limit law for large-index urns the distribution will depend on the sampling model; this will be discussed in a companion work, as well as the moment structure.
\subsection{Quadratic expected value and beyond}
Using Lemma~\ref{MuliDrawsLinLemma1} it is possible to extend Proposition~\ref{MuliDrawsLinPropLinear}
to characterize all ball replacement matrices leading to a conditional expected value 
of quadratic type, cubic type, etc., and in general to a polynomial of degree $k$, with $1\le k \le m$.
Beginning with the extension to quadratic types, 
$$\E\bigl[W_n \given \field_{n-1}\bigr]= \alpha_{n,2} W_{n-1}^2+ \alpha_{n,1} W_{n-1} +\alpha_{n,0},\qquad n\ge 1,$$
we obtain the same condition for the $a_k$'s for both models but different resulting coefficients $\alpha_{n,0}$, $\alpha_{n,1}$ and $\alpha_{n,2}$.
It is possible to obtain a explicit expression for the expected value of $W_n$, but the arising formula is very complicated 
and does not easily seem to lead to precise asymptotic expansions. 
\subsection{Urns with a large index, triangular urns, and more colors}
In the companion work~\cite{KuMaPan201314}, we complete the study of balanced urn models with multiple drawings and affine conditional expectation.  In particular, we provide a detailed analysis of the moments of $W_n$ for  triangular urn models and also for urns with a large index $\Lambda> 1/2$. The analysis is based on the so-called method of moments applied to the centered moments of $W_n$ and the martingales $\mathfrak{W}_n$, $\mathcal{W}_n$. 

\smallskip

In order to generalize the affinity condition of Proposition~\ref{MuliDrawsLinPropLinear} to more than two colors it is beneficial to rewrite the $a_k$'s as an affine combination of $a_0$ and $a_m$: $a_k=\frac{m-k}{m}a_0+ \frac{k}{m}a_m$, $0\le k\le m$.
This idea can be readily generalized to $r\ge 2$ colors. One obtains a martingale of the form
\[
\E[\mathbf{X}_n \mid \mathcal{F}_{n-1}]=({\bf I}+ \frac{1}{T_{n-1}} \tilde{\bf A}^T)\, \mathbf{X}_{n-1},
\]
with $\mathbf{X}_n=(X^{(1)}_n,\dots, X^{(r)}_n)^T$, where $X^{(\ell)}_n$ denotes the random number of balls colored $\ell$ and $\bf I$ the identity matrix. The matrix $\tilde{\bf A}^T$ is a certain $r\times r$-matrix (somewhat similar to the ``reduced'' ball replacement matrix $\matA$ introduced before), being composed of $r$ vectors appearing in the general affinity condition. Compared to the two color case, simple expressions for the (mixed) moments of $\mathbf{X}_n$ do not seem to exist, but it may be possible to study the limitings distribution of~$\mathbf{X}_n$ using different methods. 
%\section*{Acknowledgement}
%The referee is thoughtful and the report was indeed insightful, pointing out several inconsistencies. The exposition benefited significantly from the guidance in the report.

\end{document}